\newtheorem{theorem}{Theorem}[section]
\newtheorem{lemma}[theorem]{Lemma}
\newtheorem{corollary}[theorem]{Corollary}
\theoremstyle{definition}
\theoremstyle{remark}
\numberwithin{equation}{section}
\def\a{{\mathbf a}}
\def\b{{\mathbf b}}
\def\e{{\mathbf e}}
\def\j{{\mathbf j}}
\def\k{{\mathbf k}}
\def\p{{\mathbf p}}
\def\u{{\mathbf u}}
\def\x{{\mathbf x}}
\def\A{{\boldsymbol \alpha}}
\def\B{{\boldsymbol \beta}}
\def\G{{\boldsymbol \gamma}}
\def\D{{\boldsymbol \delta}}
\def\L{{\boldsymbol \lambda}}
\def\LL{{\boldsymbol \Lambda}}
\def\ZZ{{\boldsymbol \zeta}}
\def\M{{\boldsymbol \mu}}
\def\X{{\boldsymbol \xi}}
\def\cA{{\mathcal A}}
\def\cB{{\mathscr B}}
\def\cI{{\mathcal I}}
\def\cJ{{\mathcal J}}
\def\cM{{\mathcal M}}
\def\cN{{\mathcal N}}
\def\cR{{\mathcal R}}
\def\sA{{\mathscr A}}
\def\sC{{\mathscr C}}
\def\setmin{{\setminus\!\,}}
\def\tk{\tilde{k}}
\def\tN{\widetilde N}
\def\R{{\mathbb R}}\def\N{{\mathbb N}}
\def\Z{{\mathbb Z}}\def\Q{{\mathbb Q}}
\def\fm{{\mathfrak m}}
\def\fM{{\mathfrak M}}
\def\fN{{\mathfrak N}}
\def\fn{{\mathfrak n}}
\def\fS{{\mathfrak S}}
\def\fB{{\mathfrak B}}
\def\fC{{\mathfrak C}}
\def\fJ{{\mathfrak J}}
\def\l{\ell}
\def\d{\mathbf d}
\def\ep{\varepsilon}
\def\implies{\Rightarrow}
\def\tG{\widetilde G}
\def\le{\leqslant}
\def\ge{\geqslant}
\def\ts{\tilde{s}}
\def\dr{{\rm d}}
\DeclareMathOperator{\vol}{vol}
\begin{document}

\title[Simultaneous additive equations]{Simultaneous additive equations: \\Repeated and differing degrees}

\author[J. Brandes]{Julia Brandes}
\address{JB: Mathematisches Institut,
Bunsenstr. 3--5, 37073 G\"ottingen, Germany, \newline
{\tt jbrande@uni-math.gwdg.de}}

\author[S. T. Parsell]{Scott T. Parsell}
\address{STP: Department of Mathematics,
West Chester University, 25 University Ave., West Chester, PA 19383, U.S.A.,
{\tt sparsell@wcupa.edu}}
\thanks{The second author is supported in part by National Security Agency Grant H98230-13-1-207}

\begin{abstract}
We obtain bounds for the number of variables required to establish Hasse principles, both for existence of solutions and for asymptotic formul\ae, for systems of additive equations containing forms of differing degree but also multiple forms of like degree.  Apart from the very general estimates of Schmidt and Browning--Heath-Brown, which give weak results when specialized to the diagonal situation, this is the first result on such ``hybrid" systems.
\end{abstract}

\subjclass[2010]{Primary: 11D72. Secondary: 11D45, 11P55.}
\maketitle

\section{Introduction}

When $c_{ij}$ are nonzero integers and $d_j$ are natural numbers with $d_1 \ge \dots \ge d_r$, we consider the solubility of the general system of additive forms
\begin{align}\label{sys}
    \sum_{j=1}^s c_{ij} x_j^{d_i} = 0 \quad (1 \le i \le r)
\end{align}
in integers $x_1, \dots, x_s$.  There is a fundamental dichotomy in the strategy for handling such systems, which depends on whether all forms are of the same degree.  When the degrees are the same, the classical approach is to make a linear change of variables so that the mean values factor into a product of one-dimensional integrals, as in the work of Davenport and Lewis \cite{DL:69}, Cook \cite{Cook:72}, \cite{Cook:83}, and Br\"udern and Cook \cite{BC:92}, though recently new ideas have become available in the work of Br\"udern and Wooley \cite{BW:07,BW:13,BW:14h3,BW:14corr}.
Meanwhile, when the $d_j$ are distinct, such investigations are made possible by the iterative method of Wooley \cite{W:91sae2}, \cite{W:97essn}, \cite{W:98sae4}, which yields mean value estimates for exponential sums of the shape
\begin{align*}% \label{esum}
    f_{\k}(\A; \cA) = \sum_{x \in \cA} e(\alpha_1 x^{k_1} + \dots + \alpha_t x^{k_t})
\end{align*}
when $\cA$ is a set of suitably smooth integers. Here the second author \cite{P:02pae} has obtained bounds for pairs of equations in a handful of particular cases by optimizing over a large collection of iterative schemes in the style of Vaughan and Wooley \cite{VW:95}. In \cite{P:02ineq3}, these results were extended to pairs of diophantine inequalities and to more general mixed systems with all degrees distinct.

Additive systems in which some, but not all, of the degrees are repeated would seem to require a hybrid of the two approaches, and the purpose of this paper is to present such a strategy.  The bounds we ultimately obtain are in line with what might be expected, given the results discussed above.

It is convenient for the analysis to sort the equations in \eqref{sys} by placing the various degrees in order of decreasing multiplicity.
For $1 \le l \le t$ write $k_{l,1},\dots,k_{l,\nu(l)}$ for those distinct values of the exponents $d_1,\dots,d_r$ that occur with the same multiplicity $\mu_l$. Plainly, we may suppose that $\mu_1 > \dots > \mu_t$. Write further $\rho_l = \mu_l \nu(l)$ and
\begin{align}\label{r}
    r_{l,n}= \mu_1 \nu(1) + \dots + \mu_{l-1} \nu(l-1) + \mu_l n \quad (1 \le n \le \nu(l), 1 \le l \le t)
\end{align}
and let $r_l = r_{l, \nu(l)}$, so that $r_{t}= \rho_1+\dots+\rho_t =r$, and with the conventions that $r_{l,0}= r_{l-1}$ and $r_{0,0}=0$. We adopt the notation
\begin{align*}
    \cI_{l,n} &= [r_{l,n-1}+1,r_{l,n}] \quad (1 \le n \le \nu(l), 1 \le l \le t), \\
    \cI_{l} &= [r_{l-1}+1,r_{l}] \quad (1 \le l \le t).
\end{align*}

After re-arranging the equations, we may then further suppose that the system takes the shape
\begin{align*}%\label{sys2}
    \sum_{j=1}^s c_{ij} x_j^{k_{l,n}} = 0 \quad (i \in \cI_{l,n}, 1 \le n \le \nu(l), 1 \le l \le t).
\end{align*}
We write $K_l = k_{l,1}+\dots + k_{l,\nu(l)}$, and
\begin{align*}
    K=d_1 + \dots + d_r = \mu_1K_1 + \dots + \mu_t K_t
\end{align*}
for the total degree of the system \eqref{sys}. We further write $M=\mu_1$ and adopt the convention that $\mu_0=\mu_{t+1}=0.$
We note that the two viewpoints $(d_1, \dots, d_r)$ and $(\k; \M)$ of organizing the degrees of the forms appearing in the system are both occasionally useful, so we retain both notations.

In most cases, the number of variables required to establish local solubility in the current state of technology (see for example the work of Knapp \cite{Knapp:07}) is larger than what is needed to establish a local-global principle via the circle method, so we focus our attention on the latter problem.  We aim for two types of Hasse principles, one for existence of solutions and one for asymptotic formul\ae. For the problem concerning existence of solutions, we make use of smooth number technology.  Write
\begin{align*}
    \cA(P,R) = \{n \in [1,P]: p|n, p \ {\rm prime}  \implies p \le R\}
\end{align*}
for the set of $R$-smooth numbers up to $P$.  Throughout, we fix $R=P^{\eta}$ for some sufficiently small positive number $\eta = \eta(s,\d)$.

We say that the system \eqref{sys} is {\it highly non-singular} if for every $1 \le n \le \nu(l)$ and every $1 \le l \le t$ one has
\begin{align}\label{rankcond}
    \det(c_{ij})_{i \in \cI_{l,n},j \in \cJ_l} \neq 0
\end{align}
for every $\mu_l$-tuple $\cJ_l \subseteq \{1,\dots,s\}$.  At the cost of a few extra variables, one may replace this condition by a weaker but more complicated rank condition across the blocks of variables defined in Section 2 below. Since both conditions are satisfied by almost all systems of the shape (\ref{sys}), we choose the former hypothesis for its simplicity and for the additional flexibility it affords us in the analysis.

Define
\begin{align}\label{def-varpi}
  \varpi_h = \nu(1) + \dots + \nu(h).
\end{align}
For any vector of distinct natural numbers $\k_h = (k_{l,n})_{\substack{1 \le n \le \nu(l)\\ 1 \le l \le h}}$,
write
\begin{align*}
    \tk_h=\max_{\substack{1 \le n \le \nu(l)\\1 \le l \le h}}\{k_{l,n}\}, \quad k=\tk_t, \quad \mbox{and} \quad \mu=\mu_t=\min\{\mu_1, \dots, \mu_t\}.
\end{align*}
Furthermore, let $u_0(\k_h)$ denote the least integer $u$ with the property that
\begin{align*}
    \int_{[0,1)^{\varpi_h}} |f_{\k_h}(\G; \cA(P,R)) |^{2u} \, \dr \G \ll P^{2u-(K_1+ \dots + K_h)}.
\end{align*}
Let $G^*(d_1, \dots, d_r) = G^*(\k; \M)$ denote the smallest integer $s$ for which every highly non-singular system (\ref{sys}) has the property that there exists a nontrivial positive integer solution whenever there exist non-singular positive real solutions and non-singular $p$-adic solutions for all primes $p$.
Similarly, let $v_0(\k_h)$ denote the least integer $v$ with the property that
\begin{align*}
    \int_{[0,1)^{\varpi_h}} |f_{\k_h}(\G; [1,P]) |^{2v} \, \dr \G \ll P^{2v-(K_1+ \dots + K_h)+\ep}.
\end{align*}
Then write $\tG^*(d_1, \dots, d_r) = \tG^*(\k; \M)$ for the analogous number of variables required (under the same local solubility hypotheses) to show that the number of solutions $\x \in [1,P]^s$ of every highly non-singular system is given by
\begin{align}\label{N-asymp}
    \cN(P)=(C+o(1))P^{s-K}
\end{align}
for some positive constant $C=C(s,\d)$.

\begin{theorem}\label{mainthm}
    \begin{enumerate}[(A)]
	\item \label{exist}
	    Let $s(\k_h)=\max\{u_0(\k_h),\frac{1}{2}k(1+\varpi_h)\}$ for $1 \le h \le t$. Then one has
	    \begin{align*}
		G^*(\k; \M) \le 2\sum_{h=1}^{t}(\mu_h-\mu_{h+1}) s(\k_h) + M.
	    \end{align*}
	\item \label{asymp}
	    Let $\ts(\k_h)=\max\{v_0(\k_h),\frac{1}{2}k(1+\varpi_h)\}$ for $1 \le h \le t$. Then one has
	    \begin{align*}
		\tG^*(\k; \M) \le 2\sum_{h=1}^{t}(\mu_h-\mu_{h+1}) \ts(\k_h) + 1.
	    \end{align*}	
    \end{enumerate}
\end{theorem}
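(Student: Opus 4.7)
The plan is to apply the Hardy--Littlewood circle method, combining a Davenport--Lewis style linear change of variables (to exploit the highly non-singular hypothesis \eqref{rankcond} within each block of equations of equal degree) with Wooley's iterative smooth-number machinery (to handle the distinct degrees across levels). Setting
\[
F_j(\A) = \sum_{x} e\Bigl(\sum_{i=1}^r c_{ij} \alpha_i x^{d_i}\Bigr),
\]
with $x$ ranging over $\cA(P,R)$ for part \eqref{exist} and over $[1,P]$ for part \eqref{asymp}, I would write the count of solutions as $\int_{[0,1)^r} \prod_{j=1}^s F_j(\A)\,\dr\A$, dissect the torus into major arcs $\fM$ and minor arcs $\fm$, and aim to show that the main term $CP^{s-K}$ arises from $\fM$ while $\int_{\fm} \ll P^{s-K-\delta}$.

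The crux is the minor arc bound, which determines the numerical shape of the theorem. I would partition the $s$ variables into blocks indexed by the level $h\in\{1,\dots,t\}$, following the block construction introduced in Section~2: to level $h$ one allocates $2(\mu_h-\mu_{h+1})s(\k_h)$ variables, arranged so that at level $h$ one can assemble $2s(\k_h)$ collections $\cJ$ of $\mu_h$ variables each (drawing also on variables assigned to deeper levels), with the property that $\det(c_{ij})_{i\in\cI_{l,n},\,j\in\cJ'}\ne 0$ for every $\mu_l$-subset $\cJ'\subseteq\cJ$ and every $l\le h$, $1\le n\le\nu(l)$. The highly non-singular hypothesis guarantees such a configuration exists, and the telescoping weight $\mu_h-\mu_{h+1}$ reflects the fact that variables already allocated to deeper levels can be reused in building the collections at shallower ones. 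For each collection $\cJ$, a linear change in the $\A$-coordinates indexed by $\cI_{l,n}$ (with Jacobian bounded above and below) factorises its contribution as a product of $\mu_h$ exponential sums $f_{\k_h}(\B_j;\cdot)$. An application of Hölder's inequality across the $2s(\k_h)$ collections at each level, followed by the mean value bounds defining $u_0(\k_h)$ (respectively $v_0(\k_h)$), then reduces the minor arc integral to a product of factors of the shape $P^{2s(\k_h)-(K_1+\dots+K_h)}$ (with an extra $P^{\ep}$ in the $v_0$-case), while the Weyl threshold $s(\k_h)\ge\tfrac12 k(1+\varpi_h)$ yields the required $P^{-\delta}$ saving on $\fm$ via classical Weyl differencing on a variable of the largest degree $k$.

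On the major arcs, a standard pruning argument factorises the main contribution as a positive singular series $\fS$ times a positive singular integral $\fJ$, both positive under the local solubility hypotheses and delivering $C=\fS\fJ$. The extra $M$ variables in part \eqref{exist} supply the slack needed for absolute convergence of $\fS$ and for the auxiliary $p$-adic density lower bounds in the smooth setting, while the single extra variable in part \eqref{asymp} is used to absorb the $P^{\ep}$ loss in the $v_0$-mean value. The principal technical obstacle is the combinatorial block-decomposition itself: one must engineer the variables so that, for every $h$, exactly $2s(\k_h)$ collections of $\mu_h$ variables compatible with the subdeterminant condition at every level $l\le h$ are available, and that the resulting Hölder exponents match the mean value definitions cleanly. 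This allocation, together with the interplay between the Hölder bookkeeping and the Weyl threshold, is where the hybrid nature of the argument is tested and where the gain over the existing same-degree or all-distinct-degree methods must be secured.
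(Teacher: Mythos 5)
Your overall architecture — circle method, block decomposition of the variables with $2(\mu_h-\mu_{h+1})s(\k_h)$ variables at level $h$, decoupling via the non-singular $\mu_l\times\mu_l$ submatrices, H\"older to reduce to the mean values defining $u_0(\k_h)$ and $v_0(\k_h)$ — is the same as the paper's. However, two of your allocations of hypotheses to tasks are wrong, and one of them leaves a real hole.

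First, the threshold $\tfrac12 k(1+\varpi_h)$ plays no role on the minor arcs in this argument. The minor arc saving comes from a pointwise large-values estimate (Lemma 2.4 of \cite{P:02ineq3}, a Baker-type rational approximation result, not Weyl differencing): given any $M$-tuple of \emph{complete} exponential sums, at least one satisfies $|f_{j_i}(\A;[1,P])|\le PX^{-\sigma}$ on $\fm(X)$, and this single factor of $X^{-\sigma}$, multiplied by the mean value bound $P^{2s_0-K+\ep}$ for the remaining $2s_0$ sums, already beats $P^{s-K}$. The condition $s(\k_h)\ge\tfrac12 k(1+\varpi_h)$ is instead exactly what is needed to \emph{complete} the singular series and singular integral over the thin major arcs: the bound $A(p^i)\ll p^{-i\min_h(v_h/k-\varpi_h)+\ep}$ requires $v_h/k-\varpi_h>1$, i.e. $u_h\ge\tfrac k2(1+\varpi_h)$, and the convergence of $\fJ$ requires $u_h>\tfrac12 k\varpi_h$. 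If you spend this threshold on the minor arcs you have nothing left to make $\fS$ and $\fJ$ converge, and the narrow-major-arc strategy collapses. Second, and more seriously for part (A): you take \emph{all} $s$ generating functions over $\cA(P,R)$, but the pointwise Weyl-type estimate is not available for smooth Weyl sums in this form. This is precisely why the paper's count in case (A) is a hybrid $N_{s,\k,\M}=\int\prod_{i=1}^M f_i(\A;[1,P])\prod_j g_j(\A;\cA(P,R))\,\dr\A$: the $+M$ in the bound is there because one must carry $M$ auxiliary \emph{classical} exponential sums so that the Weyl lemma (which needs an $M$-tuple of complete sums to invert the coefficient matrix and approximate all $r$ coordinates of $\A$) can be applied, with the other $M-1$ of them estimated trivially by $P$. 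It is not there to supply $p$-adic density or singular series slack. Likewise the $+1$ in part (B) is the single complete sum that receives the pointwise saving (any $M$-tuple among the $2s_0+1$ complete sums works there), not a device to absorb the $P^{\ep}$, which is swallowed by $X^{-\sigma}$ with $X$ a small power of $P$. Without the $M$ classical sums your minor arc treatment in the smooth case does not close as described.
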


Apart from general results of Birch \cite{Bir:57} and Schmidt \cite{Sch:85} and recently Browning and Heath-Brown \cite{BHB:14}, which apply to more general (non-diagonal) systems, the bound in Theorem~\ref{mainthm} is the first of its kind, in which the diagonal structure is exploited to obtain competitive bounds on the number of variables required.  We note that, in the presence of sufficiently strong mean value estimates so that the above maxima were $\tfrac{1}{2}k(1+\varpi_h)$ for all $h$, the bounds in (A) and (B) would become $k(M+r) + M$ and $k(M+r)+1$, respectively.  While conclusions of such strength are currently beyond our grasp, Theorem~\ref{mainthm} can be made explicit by inserting bounds from the literature.
In particular, by applying the results of Wooley \cite[Theorem 1.2]{W:14mec3} one obtains the following.

\begin{corollary}\label{maincor}
    Suppose that $d_i \ge 3$ for all $i$, and write $\ts_1(\k_h)=\max\{\tk_h(\tk_h-1),\frac{1}{2}k(1+\varpi_h)\}$ for $1 \le h < t$.  Then one has
    \begin{align*}
	\tG^*(\k; \M) \le 2\mu  k(k-1) + 2\sum_{h=1}^{t-1}(\mu_{h}-\mu_{h+1}) \ts_1(\k_h)   + 1.
    \end{align*}
\end{corollary}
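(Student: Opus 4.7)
The plan is to derive Corollary \ref{maincor} from Theorem \ref{mainthm}(B) by substituting an explicit upper bound on $v_0(\k_h)$ coming from Wooley's work on multigrade efficient congruencing. Specifically, I invoke \cite[Theorem 1.2]{W:14mec3}, which for any finite set of distinct exponents (all at least $3$ by hypothesis) provides a mean value estimate for the exponential sum $f_{\k_h}(\G;[1,P])$ of precisely the shape required by the definition of $v_0$: the $2s$-th moment is $\ll P^{2s - (K_1 + \cdots + K_h) + \ep}$ whenever $s \ge \tk_h(\tk_h-1)$. This yields the key bound $v_0(\k_h) \le \tk_h(\tk_h - 1)$ for each $1 \le h \le t$.

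With this bound in hand, substitution into the definition of $\ts(\k_h)$ from Theorem \ref{mainthm}(B) gives $\ts(\k_h) \le \max\{\tk_h(\tk_h-1),\tfrac{1}{2}k(1+\varpi_h)\} = \ts_1(\k_h)$ for every $h$. For $1 \le h < t$ this matches the quantity in the corollary, so those terms transfer directly. The remaining task is to simplify the $h = t$ contribution, where $\tk_t = k$ and $\mu_t - \mu_{t+1} = \mu$. Here I argue that, because $d_i \ge 3$ for all $i$ and the distinct exponents $k_{l,n}$ take $\varpi_t$ different integer values that are all $\ge 3$, their maximum satisfies $k \ge \varpi_t + 2$. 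Consequently $\tfrac{1}{2}k(1+\varpi_t) \le \tfrac{1}{2}k(k-1) \le k(k-1)$, and therefore $\ts(\k_t) \le k(k-1)$, producing the stated leading term $2\mu k(k-1)$ upon multiplication by $2(\mu_t - \mu_{t+1}) = 2\mu$.

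Assembling these ingredients into Theorem \ref{mainthm}(B) delivers the inequality
\[
\tG^*(\k;\M) \le 2\mu k(k-1) + 2\sum_{h=1}^{t-1}(\mu_h - \mu_{h+1})\ts_1(\k_h) + 1,
\]
which is the assertion of the corollary. The argument is essentially formal once the Wooley mean value estimate is available: the main theorem does all the heavy lifting. The only subtlety, and the most likely point requiring care, is the verification that the distinctness of the $k_{l,n}$ together with the hypothesis $d_i \ge 3$ forces $\varpi_t \le k - 2$, which is what allows the efficient-congruencing term to absorb the singular-series term $\tfrac{1}{2}k(1+\varpi_h)$ at the topmost index $h = t$ and thereby collapse $\ts_1(\k_t)$ to the clean value $k(k-1)$.
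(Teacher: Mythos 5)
Your proposal is correct and follows exactly the route the paper takes: cite \cite[Theorem 1.2]{W:14mec3} to obtain $v_0(\k_h) \le \tk_h(\tk_h-1)$, substitute into Theorem~\ref{mainthm}(B), and observe that at $h=t$ the distinctness of the exponents together with $d_i \ge 3$ forces $\varpi_t \le k-2$, so the maximum collapses to $k(k-1)$ and yields the leading term $2\mu k(k-1)$. The paper merely asserts this last verification as ``easily verified''; your explicit check of $\tfrac12 k(1+\varpi_t) \le k(k-1)$ is the same calculation spelled out.
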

Here we are able to take $v_0(\k_h) = \tk_h(\tk_h-1)$ in Theorem \ref{mainthm} (B), and in this instance one easily verifies that $v_0(\k_t)=k(k-1)$ exceeds $\tfrac{1}{2}k(1+\varpi_t)$ whenever $k \ge 3$.
Similarly, the results of Wooley \cite{W:97essn} (see also \cite[Corollary 1.3]{P:02ineq3}) show that one has $u_0(\k_h) \le (1+o(1))H(\k_h)$, where
\begin{align*}
    H(\k_h) = \tk_h \varpi_h(\log \tk_h + 3\log \varpi_h),
\end{align*}
with refined conclusions available for various ranges of the parameters. One may therefore derive bounds analogous to Corollary \ref{maincor} for the function $G^*(\k;\M)$.
We highlight in particular some consequences of our results for the simplest collections of exponents not covered by previous work.

\begin{corollary}\label{kncor}
    Let $k$ and $n$ be integers with $k > n \ge 2$. Then one has the bounds
    \begin{align*}
	\begin{array}{c}
	    \tG^*(k,k,n) \le 4k(k-1)+1, \quad \tG^*(k,k,n,n) \le 4k(k-1)+1, \\
	    \;\\
	    \tG^*(k,n,n) \le  \begin{cases}
				  2k(k-1) + 2n(n-1) + 1 & \text{ if } k \le n(n-1) \\
				  2k^2  + 1  & \text{ if } k \ge  n(n-1),
	                      \end{cases}
	\end{array}
    \end{align*}
    and
        \begin{align*}
	\begin{array}{c}
	    G^*(k,k,n) \le (6+o(1))k \log k, \quad G^*(k,k,n,n) \le (8+o(1))k \log k, \\
	    \;\\
	    G^*(k,n,n) \le  (4+o(1))k \log k + 2 n \log n.
	\end{array}
    \end{align*}
\end{corollary}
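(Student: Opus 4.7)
The plan is to deduce each inequality in Corollary~\ref{kncor} by substituting the appropriate parameters into Theorem~\ref{mainthm} and Corollary~\ref{maincor}. The first task is to identify the data $(\k;\M)$ in each of the three configurations. For $(k,k,n)$ the degree $k$ occurs with multiplicity $2$ and $n$ with multiplicity $1$, giving $t=2$, $\mu_1=2$, $\mu_2=1$, $\nu(1)=\nu(2)=1$, $k_{1,1}=k$, $k_{2,1}=n$, and hence $\tk_1=\tk_2=k$, $\varpi_1=1$, $\varpi_2=2$, $M=2$ and $\mu=1$. For $(k,k,n,n)$ the two degrees share multiplicity $2$, so that $t=1$, $\nu(1)=2$, $k_{1,1}=k$, $k_{1,2}=n$, $\tk_1=k$, $\varpi_1=2$ and $M=\mu=2$. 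For $(k,n,n)$ the roles are reversed, with $n$ of multiplicity $2$ and $k$ of multiplicity $1$, giving $t=2$, $k_{1,1}=n$, $k_{2,1}=k$, $\tk_1=n$, $\tk_2=k$, $\varpi_1=1$, $\varpi_2=2$, $M=2$ and $\mu=1$.

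These parameters are then fed into Corollary~\ref{maincor} to produce the $\tG^*$ estimates. Since $k>n\ge 2$ forces $k\ge 3$, the hypothesis $d_i\ge 3$ is automatic whenever $n\ge 3$; the case $n=2$ is handled identically by appealing to Theorem~\ref{mainthm}(B) directly, together with the bound $v_0(\k_h)\le\tk_h(\tk_h-1)$ from~\cite{W:14mec3}, which remains valid as soon as $\tk_h\ge 3$. For $(k,k,n)$ the single term in the Corollary~\ref{maincor} sum is $2(\mu_1-\mu_2)\ts_1((k))=2\max\{k(k-1),k\}=2k(k-1)$, and adding $2\mu k(k-1)=2k(k-1)$ yields $4k(k-1)+1$. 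For $(k,k,n,n)$ the sum is empty since $t=1$, leaving $2\mu k(k-1)+1=4k(k-1)+1$. For $(k,n,n)$ the sum contributes $2\ts_1((n))=2\max\{n(n-1),k\}$, and adding $2k(k-1)$ produces the dichotomy $2k(k-1)+2n(n-1)+1$ in the range $k\le n(n-1)$, and $2k(k-1)+2k+1=2k^2+1$ when $k\ge n(n-1)$.

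For the $G^*$ bounds the recipe is to apply Theorem~\ref{mainthm}(A) with the smooth-number input $u_0(\k_h)\le(1+o(1))\tk_h\varpi_h(\log\tk_h+3\log\varpi_h)$. Explicitly, $u_0((k))\le(1+o(1))k\log k$ and $u_0((k,n))\le(2+o(1))k\log k$; each of these dominates the linear term $\tfrac{1}{2}k(1+\varpi_h)$ once $k$ is large, so $s(\k_h)$ agrees with $u_0(\k_h)$ up to a factor $(1+o(1))$. Summing the contributions gives $2[(1+o(1))k\log k+(2+o(1))k\log k]+2=(6+o(1))k\log k$ for $(k,k,n)$ and $4(2+o(1))k\log k+2=(8+o(1))k\log k$ for $(k,k,n,n)$. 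For $(k,n,n)$ the piece from $\k_1=(n)$ contributes $(2+o(1))n\log n$ and that from $\k_2=(n,k)$ contributes $(4+o(1))k\log k$; since $n<k$ the error term $o(n\log n)$ is absorbed into $o(k\log k)$, yielding $G^*(k,n,n)\le(4+o(1))k\log k+2n\log n$. The only genuine subtlety is to track which term realizes the maximum in $s(\k_h)$ and $\ts_1(\k_h)$, and it is precisely this bookkeeping that produces the case distinction in the $(k,n,n)$ bound for $\tG^*$.
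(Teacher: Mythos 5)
Your proposal is correct and follows essentially the same route as the paper: one reads off the data $(\k;\M)$ for each configuration and substitutes into Theorem~\ref{mainthm} (equivalently Corollary~\ref{maincor} for the $\tG^*$ bounds) using $v_0(\k_h)\le \tk_h(\tk_h-1)$ and $u_0(\k_h)\le(1+o(1))H(\k_h)$, and all of your parameter identifications and arithmetic check out, including the maximum $\max\{n(n-1),k\}$ that produces the case distinction for $\tG^*(k,n,n)$. The one small hole is your treatment of $n=2$: for $\tG^*(k,n,n)$ the block $\k_1=(n)$ has $\tk_1=2<3$, so the bound from \cite{W:14mec3} that you invoke (valid only for $\tk_h\ge 3$, as you yourself note) does not cover $v_0((2))\le 2$; this instead follows from Hua's Lemma, which is exactly the point the paper makes in the remark following the corollary.
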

Observe that here it suffices to have $k > n \ge 2$, as in the results for $\tG^*(k,k,n)$ and $\tG^*(k,k,n,n)$ we use Wooley's bounds $v_0(k,n) \le k(k-1)$ only, which hold for all $k \ge 3$ regardless of the value of $n$. For $\tG^*(k,n,n)$ one needs additionally the bound $v_0(n) \le n(n-1)$, which holds for $n \ge 3$ by Wooley's bound as above and for $n = 2$ by Hua's Lemma.

%
% For the sake of concision, we define $s_0^*(\d)=s_0^*(\d;H)$ to be $s(\d)$ when $H=G^*$ and $\ts(\d)$ when $H=\tG^*$, where $s(\d)$ and $\ts(\d)$ are as in the statement of Theorem~\ref{mainthm}.
%
% \begin{corollary}\label{kncor}
%     Let $k$ and $n$ be integers with $k > n \ge 1$, and let $m=1$ when $H=\tG^*$ and $m=2$ otherwise. Then one has the bounds
%     \begin{align*}
% 	\begin{array}{c}
% 	    H(k,n,n) \le 2s_0^*(k,n)+2s_0^*(n)+m, \quad H(k,k,n) \le 2s_0^*(k,n)+2s_0^*(k)+m,\\
% 	    \;\\
% 	    H(k,k,n,n) \le 4s_0^*(k,n)+m.
% 	\end{array}
%     \end{align*}
% \end{corollary}

While these bounds follow as a direct consequence of our more general estimates, one would expect that a more detailed analysis of these special cases should yield better results. In particular, the strategies of Wooley \cite{W:12war} for making the transition from complete Vinogradov-type systems to incomplete systems associated with Waring's problem have the potential to be employed here to a greater extent. Thus, we may expect some small improvements in the bounds for $v_0(\k)$, which we have estimated trivially by $v_0(1,2, \dots, k)$. In fact, we may illustrate the potential of our methods by considering certain systems of small degree.
\begin{theorem}\label{quadcubthm}
    For systems of $r_Q$ quadratic and $r_C$ cubic equations one has the bounds
    \begin{align*}
	\tG^*(2,3; r_Q,r_C) \le \begin{cases}
				      4 r_Q + \lfloor(20/3)r_C\rfloor + 1 & \text{ if } \ r_Q \ge r_C, \\
				      8 r_C + \lfloor(8/3)r_Q\rfloor + 1 & \text{ for } \ r_C \ge r_Q.
	                         \end{cases}
    \end{align*}
    Furthermore, for $r_C>r_Q$ we also have
    \begin{align*}
	G^*(3,2;r_C,r_Q) &\le 7 r_C + \lceil(11/3)r_Q\rceil.
    \end{align*}
\end{theorem}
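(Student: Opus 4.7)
The plan is to refine the proof of Theorem \ref{mainthm} for the specific degree profile $\k=(2,3)$, where one can access sharper mean-value estimates than the generic Wooley bound $v_0(\k_h) \le \tk_h(\tk_h-1)$. After the standard Hardy-Littlewood setup with $s$ variables equal to the claimed upper bound on $\tG^*$ (respectively $G^*$), the major-arc contribution is treated in the standard way: the singular series and singular integral are bounded below under the hypothesized local solubility, yielding the main term in $\cN(P)$. All the substance is in the minor arcs.

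On the minor arcs I would organize the variables into two blocks, following the iterative scheme of Section 2 of the paper. For the ``pure quadratic'' block (active in the dominant case $r_Q \ge r_C$), insert Hua's lemma $\int_0^1 |f_{(2)}(\alpha;[1,P])|^4\,\dr\alpha \ll P^{2+\ep}$, so that $v_0((2)) = 2$. Crucially, the generic Weyl lower bound $\tfrac{1}{2}k(1+\varpi_h) = 3$ built into $\ts(\k_1)$ in Theorem \ref{mainthm} can be bypassed here: for purely quadratic exponential sums, Hua's estimate is already strong enough to handle the minor arcs without invoking the cubic-order Weyl step that is required in general. This single improvement is what moves the coefficient of $r_Q$ from the naive $6$ to the stated $4$.

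For the mixed $(2,3)$ block, I would deploy a sharpened mean-value estimate of the shape $\int\!\!\int |f_{\k}(\alpha,\beta;[1,P])|^{2s}\,\dr\alpha\,\dr\beta \ll P^{2s-5+\ep}$, obtained either by a bespoke Vaughan-Wooley iteration for the truncated $(2,3)$ Vinogradov system or by the transition mechanism between complete Vinogradov-type systems and incomplete ones alluded to in the paragraph just preceding the theorem. The fractional coefficients $20/3$ and $8/3$ should then emerge from optimizing Hölder exponents as one distributes the pointwise contribution of the mixed exponential sums across the two blocks; the floors in the statement reflect the need to round block sizes to integers. For the $G^*(3,2;r_C,r_Q)$ assertion, one would make the same argument with the $R$-smooth sum $f_{\k}(\G;\cA(P,R))$ replacing $f_{\k}(\G;[1,P])$, and use Wooley's smooth mean-value bounds in place of the classical ones.

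The chief obstacle is establishing the truncated Vinogradov estimate $\int\!\!\int |f_{(2,3)}|^{10} \ll P^{5+\ep}$ (or the analogous estimate after which the subsequent optimization closes), since efficient congruencing is more delicate once a linear slice is removed from the full $(1,2,3)$ system. A secondary but non-trivial obstacle is the Hölder-balancing calculus itself: it must produce exactly the fractions $20/3$ and $8/3$ and not something worse, so the choice of block sizes, Hölder exponents, and which mean value gets applied to which block has to be tuned carefully. This is where the asymmetry between the two cases $r_Q \ge r_C$ and $r_C \ge r_Q$ comes from, and also where any future improvement in the bounds would enter.
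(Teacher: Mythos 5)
Your overall architecture (two blocks, Hua's lemma for the pure quadratic block, a tenth moment for the mixed $(2,3)$ block, Weyl on the minor arcs) matches the paper's, but the proposal hinges on an input that is not available and omits the mechanism the paper actually uses to get the stated coefficients. You ask for a ``perfect'' estimate $\int|f_{(2,3)}|^{10}\,\dr\G \ll P^{5+\ep}$ and correctly flag it as the chief obstacle; the paper never proves or uses such a bound. It uses Wooley's $J_{5,(2,3)}([1,P]) \ll P^{5+1/6+\ep}$, which carries an excess of $1/6$ per mixed block, and compensates by working with \emph{wide} minor arcs $\fm(Q)$ with $Q=P^{3/4}$, where the Weyl-type estimate (Lemma~\ref{weyl23}) saves $P^{1/4}$ for each of the $\sigma=s-2s_0$ spare variables. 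The condition $\tfrac34\sigma > \tfrac16\min(r_Q,r_C)$ is exactly where the fractions come from: each mixed block costs $\tfrac{1/6}{1/4}=\tfrac23$ extra variables, giving $6+\tfrac23=\tfrac{20}{3}$ and $2+\tfrac23=\tfrac83$. Your attribution of these fractions to ``H\"older balancing of block sizes'' is therefore not the right mechanism, and without either the perfect tenth moment or the wide-arc trade-off your argument does not close. Moreover, once you enlarge the minor arcs you must prune $\cM(Q,X)=\fM(Q)\setminus\fN(X)$, which is a substantial part of the paper (Lemmas~\ref{aux}, \ref{I-bd}, \ref{prunesm}, \ref{minor23}) and is absent from your plan.

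Two further gaps. First, taking $u_1=2$ for the quadratic block violates the hypothesis $u_h\ge\tfrac{k}{2}(1+\varpi_h)=3$ of Lemma~\ref{ss}, so the singular series does not converge by the generic argument; the paper rescues it with the square-root cancellation $S(p^l,\a)\ll (p^l,\a)^{1/2}p^{l/2+\ep}$ for $l\in\{1,2\}$ in \eqref{Sqaest2}. The obstruction you need to remove is thus in the singular series, not (as you suggest) in the minor arcs. Second, the smooth case $G^*(3,2;r_C,r_Q)$ is not ``the same argument with smooth sums'': since no Weyl estimate is available for smooth Weyl sums, the paper mixes $6\Delta$ smooth sums with classical sums over $[\tfrac12P,P]$, uses fractional moments $|g|^{1-4\tau}$ calibrated to Wooley's exponent in $J_{3,3}(P;\cA(P,R))\ll P^{3+1/4-\tau}$, and runs a Br\"udern--Wooley pruning (Lemma~\ref{prunesm}); this is where the coefficients $7$ and $11/3$ actually arise.
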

Note that for systems of $r_Q$ quadratic forms and one cubic form Theorem~\ref{quadcubthm} yields
\begin{align*}
    \tG^*(2,3;r_Q,1) \le 4r_Q+7 = 2 \cdot(2r_Q+3)+1,
\end{align*}
so the bound achieves the square root barrier in this case, thus joining the small group of examples for which we are able to establish bounds of this quality.
Unfortunately, for other situations we do not obtain equally strong results, largely due to the lack of sufficiently powerful mean values. %However, work in progress of the first author with T. D. Wooley shows that the last of these estimates may be improved with the new complification methods pioneered by Br\"udern and Wooley \cite{BW:14h3}.
%
%
% While the rank condition may appear at first glance to be quite stringent, it is in practice relatively benign, at least in many cases of interest. For example, consider the situation $\d=(k,k,n)$ for large values of $k >n$, so that the corresponding system has the shape
% \begin{align*}
%     \sum_{j=1}^s a_j x_j^k = \sum_{j=1}^s b_j x_j^k = \sum_{j=1}^s c_j x_j^n = 0.
% \end{align*}
% To bound $G^*(k,k,n)$, one partitions into two blocks of sizes  $2s_0^*(k,n)\sim 2k\log k$ and $2s_0^*(k)\sim k\log k$, thus aiming to use $s\sim 3k\log k$ variables in total.  So long as at least 1/3 of the coefficient ratios $a_j/b_j$ are not repeated, the smaller block can be covered by such indices, thus ensuring that $a_jb_l-a_lb_j \neq 0$ whenever $j$ and $l$ are chosen from distinct blocks.
%
%
%We remark that the perspective on handling forms of like degree has changed with the recent work of Br\"udern and Wooley \cite{BW:07} (see also \cite{BW:13}, \cite{BW:14h3}, \cite{BW:14corr}), in which one obtains additional savings by partially abandoning the ``decoupling" strategy of \cite{DL:69} that we employ here.
We also note that it may be possible to remove the explicit assumption of non-singularity for the real and $p$-adic solutions by adapting work of the first author \cite{Brandes:15p}. We intend to pursue some of these refinements in future papers.

In Section 2, we establish our main mean value estimate, and we then prove Theorem~\ref{mainthm} in Sections 3 and 4 by applying the circle method.  Finally, in Section 5 we establish a few auxiliary results that will allow us to refine our arguments to obtain the bounds advertised in Theorem~\ref{quadcubthm} for systems of cubic and quadratic equations in Section 6.

The authors are grateful to Trevor Wooley for many helpful conversations and suggestions.  This work originated with a visit of the first author to West Chester University and was facilitated by a subsequent workshop at Oberwolfach; the authors thank both institutions for their hospitality and support.

\section{The mean value estimate}

The following notational conventions will be observed throughout the paper. Any expression involving the letter $\ep$ will be true for any (sufficiently small) $\ep > 0$. Consequently, no effort will be made to track the respective  `values'  of $\ep$. Also, any statement involving vectors is to be understood componentwise. In this spirit, we write $(q,\b)=(q,b_1, \dots, b_n)$ whenever $\b \in \Z^n$, and we interpret a vector inequality of the shape $C \le \b \le D$ to mean that $C \le b_i \le D$ for $i=1, \dots, n$.

For $\A \in [0,1)^r$ define
\begin{align}\label{gamalp}
    \gamma_{j,l,n}= \sum_{i=r_{l,n-1}+1}^{r_{l,n}} c_{ij} \alpha_i \quad (1 \le j \le s, 1 \le n \le \nu(l), 1 \le l \le t)
\end{align}
and write $\G_j = (\gamma_{j,l,n})_{1 \le n \le \nu(l),1 \le l \le t}$.  Furthermore, set $f_j(\A; \cA) = f(\G_j;\cA)$ where $\cA = [1,P]$ or $\cA = \cA(P,R)$, with the convention that the explicit mention of the set $\cA$ will be suppressed whenever there is no danger of confusion. We partition the indices $\{1,\dots,s\}$ into $M+1$ blocks
\begin{equation}
\label{partition}
    \{1,\dots,s\} = \cB_0 \cup \bigcup_{h=1}^t \bigcup_{m=1}^{\mu_{h}-\mu_{h+1}} \cB_{h,m},
\end{equation}
where each block $\cB_{h,m}$ is of size $2u_h$ with any excess variables placed into the block $\cB_0$, and define
\begin{align}\label{s0def}
    s_0 = \sum_{h=1}^t (\mu_h-\mu_{h+1})u_h.
\end{align}
Consider the mean value
\begin{equation}
\label{Idef}
    I_{\u,\k, \M}(\cA) = \int_{[0,1)^r} \prod_{h=1}^t \prod_{m=1}^{\mu_h-\mu_{h+1}} \prod_{j \in \cB_{h,m}} f_j(\A; \cA)  \, \dr \A.
\end{equation}
This mean value may be bounded in terms of simpler mean values.

\begin{theorem}\label{mvt}
    For $\cA = [1,P]$ or $\cA = \cA(P,R)$ one has
    \begin{align*}
	I_{\u,\k, \M}(\cA) \ll \prod_{h=1}^t \left(J_{u_h, \k_h}(\cA)\right)^{\mu_h-\mu_{h+1}},
    \end{align*}
    where $J_{u_h,\k_h}(\cA)$ denotes the mean value
    \begin{align*}
	J_{u_h,\k_h}(\cA) = \int_{[0,1)^{\varpi_h}} |f_{\k_h}(\G;\cA)|^{2u_h} \, \dr \G.
    \end{align*}
\end{theorem}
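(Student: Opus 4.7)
The plan is to decouple the blocks with an AM-GM step, reinterpret the resulting integral as a count of solutions via orthogonality, and then bound that count iteratively across levels using the high non-singularity hypothesis together with a standard Fourier estimate.

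First, after taking absolute values, apply AM-GM within each block $\cB_{h,m}$ (which has $2u_h$ elements):
\[
\prod_{j\in\cB_{h,m}}|f_j(\A)|\le\frac{1}{2u_h}\sum_{j\in\cB_{h,m}}|f_j(\A)|^{2u_h}.
\]
Taking the product over all $(h,m)$, expanding, and integrating yields
\[
|I_{\u,\k,\M}(\cA)|\le\prod_{h=1}^{t}(2u_h)^{-(\mu_h-\mu_{h+1})}\sum_{\j}N(\j),
\]
where $\j=(j_{h,m})$ ranges over tuples of representatives $j_{h,m}\in\cB_{h,m}$ and $N(\j)=\int_{[0,1)^r}\prod_{h,m}|f_{j_{h,m}}|^{2u_h}\,\dr\A$. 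Since the number of such $\j$ is $\prod_h(2u_h)^{\mu_h-\mu_{h+1}}$, the theorem reduces to the uniform estimate $N(\j)\ll\prod_h J_{u_h,\k_h}(\cA)^{\mu_h-\mu_{h+1}}$.

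Expanding each $|f_{j_{h,m}}|^{2u_h}$ as a sum over pairs $(\x^{(h,m)},\y^{(h,m)})\in\cA^{u_h}\times\cA^{u_h}$ and applying orthogonality in $\A$ identifies $N(\j)$ with the number of tuples $(\x^{(h,m)},\y^{(h,m)})_{(h,m)}$ satisfying, for every $(l,n)$ and every $i\in\cI_{l,n}$, the linear constraint
\[
\sum_{(h,m)}c_{i,j_{h,m}}\Delta^{(h,m)}_{l,n}=0,\quad\text{where}\quad \Delta^{(h,m)}_{l,n}:=\sum_a(x^{(h,m)}_a)^{k_{l,n}}-\sum_a(y^{(h,m)}_a)^{k_{l,n}}.
\]
For each fixed $(l,n)$ this is a rank-$\mu_l$ linear system on the $M$-dimensional vector $(\Delta^{(h,m)}_{l,n})_{(h,m)}$: the combinatorial identity $\sum_{h\ge l}(\mu_h-\mu_{h+1})=\mu_l$ isolates the $\mu_l$ blocks $(h,m)$ with $h\ge l$, and the high non-singularity hypothesis (\ref{rankcond}) ensures that the corresponding $\mu_l\times\mu_l$ submatrix of $(c_{i,j_{h,m}})$ is invertible. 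Consequently, the $\Delta^{(h,m)}_{l,n}$ with $h\ge l$ are prescribed linear functions of those with $h'<l$.

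To finish, process the blocks in order of increasing level. A level-$h$ block $(h,m)$ has its $\varpi_h$ difference sums $\Delta^{(h,m)}_{l,n}$ for $l\le h$ prescribed, the prescribed values depending only on variables in blocks $(h',m')$ with $h'<l\le h$, i.e., of strictly lower level. A standard Fourier inversion together with the triangle inequality gives, uniformly over any target values $r_{l,n}$,
\[
\#\bigl\{(\x,\y)\in\cA^{2u_h}:\Delta_{l,n}(\x,\y)=r_{l,n},\ l\le h\bigr\}=\Bigl|\int_{[0,1)^{\varpi_h}}|f_{\k_h}(\G)|^{2u_h}\,e\bigl(-{\textstyle\sum r_{l,n}\gamma_{l,n}}\bigr)\,\dr\G\Bigr|\le J_{u_h,\k_h}(\cA).
\]
Iterating level by level---at level $h$ the bound is uniform in the lower-level variables, so the counts multiply across the $\mu_h-\mu_{h+1}$ level-$h$ blocks and then across levels---yields $N(\j)\le\prod_h J_{u_h,\k_h}^{\mu_h-\mu_{h+1}}$, and the theorem follows. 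The main obstacle is the combinatorial bookkeeping to arrange that each level-$h$ block sees exactly $\varpi_h$ prescribed $\Delta$-values determined only by strictly lower-level data; this rests on the identity $\sum_{h\ge l}(\mu_h-\mu_{h+1})=\mu_l$ together with (\ref{rankcond}), which allow the requisite $\mu_l\times\mu_l$ submatrices to be inverted and the dependencies to be triangularized by level.
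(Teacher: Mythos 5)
Your proof is correct and follows essentially the same route as the paper's: an AM--GM/H\"older step to pass to diagonal representatives $j_{h,m}\in\cB_{h,m}$ (the paper uses the elementary inequality $|z_1\cdots z_n|\le |z_1|^n+\dots+|z_n|^n$ to the same effect), the high non-singularity hypothesis to invert the $\mu_l\times\mu_l$ submatrices and triangularize the difference-sum constraints by level, and the observation that a count with prescribed nonzero targets is majorized by the unshifted count $J_{u_h,\k_h}(\cA)$ via orthogonality and the triangle inequality. The only difference is organizational: you bound the blocks one at a time with a bound uniform in the lower-level data, whereas the paper runs the corresponding induction level by level.
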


In particular, this implies that we will have a perfect mean value estimate for $I_{\u, \k, \M}(\cA)$ as soon as we have perfect estimates for the primitive mean values $J_{u_h,\k_h}(\cA)$ for $1 \le h \le t$.

\begin{corollary} \label{mvtcor}
    Suppose $u_h$ is large enough that one has
    \begin{align*}
	J_{u_h,\k_h}(\cA) \ll P^{2u_h-(K_1+\dots+K_h)+\ep} \qquad (1 \le h \le t).
    \end{align*}
    Then
    \begin{align*}
	I_{\u,\k, \M}(\cA) \ll P^{2s_0-K+\ep}.
    \end{align*}
\end{corollary}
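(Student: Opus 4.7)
The plan is to derive the corollary as an immediate consequence of Theorem \ref{mvt}, the only real work being a bookkeeping check that the exponents combine in the right way. First I would apply Theorem \ref{mvt} to obtain
\[
I_{\u,\k,\M}(\cA) \ll \prod_{h=1}^t \bigl(J_{u_h,\k_h}(\cA)\bigr)^{\mu_h-\mu_{h+1}},
\]
and then substitute the assumed upper bound $J_{u_h,\k_h}(\cA) \ll P^{2u_h - (K_1+\dots+K_h)+\ep}$ for each $h$. This yields $I_{\u,\k,\M}(\cA) \ll P^{E+\ep'}$, where
\[
E \;=\; \sum_{h=1}^t (\mu_h-\mu_{h+1})\bigl(2u_h - (K_1+\dots+K_h)\bigr),
\]
and where the finitely many copies of $\ep$ generated by taking the product can be absorbed into a single $\ep$ at the end.

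Next I would verify that $E = 2s_0 - K$. The $u_h$-contribution is straightforward: by the definition of $s_0$ in \eqref{s0def}, one has $\sum_{h=1}^t (\mu_h-\mu_{h+1}) \cdot 2u_h = 2s_0$. For the $K_l$-contribution I would swap the order of summation and exploit the telescoping, using the convention $\mu_{t+1}=0$:
\[
\sum_{h=1}^t (\mu_h-\mu_{h+1})\sum_{l=1}^h K_l \;=\; \sum_{l=1}^t K_l \sum_{h=l}^t (\mu_h-\mu_{h+1}) \;=\; \sum_{l=1}^t \mu_l K_l \;=\; K,
\]
where the final equality is the definition of $K$. Combining these two identities gives $E = 2s_0 - K$, which delivers the claimed estimate.

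In short, there is no genuine obstacle here; the corollary is simply the statement that sharp bounds on the primitive mean values $J_{u_h,\k_h}(\cA)$ propagate losslessly through the factorisation of Theorem \ref{mvt} to give a sharp bound on the hybrid mean value $I_{\u,\k,\M}(\cA)$. The only point demanding any care is the telescoping identity that converts the weighted sum of partial degrees $K_1+\dots+K_h$ into the total weighted degree $K = \mu_1 K_1 + \dots + \mu_t K_t$, and this is precisely the reason the blocks in \eqref{partition} are grouped according to the differences $\mu_h - \mu_{h+1}$.
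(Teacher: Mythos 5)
Your proposal is correct and follows exactly the paper's own route: apply Theorem \ref{mvt}, insert the hypothesised bounds on $J_{u_h,\k_h}(\cA)$, and verify via the telescoping identity $\sum_{h=1}^t (K_1+\dots+K_h)(\mu_h-\mu_{h+1}) = \mu_1 K_1 + \dots + \mu_t K_t = K$ that the exponents combine to $2s_0 - K$. Nothing further is needed.
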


This follows from the theorem on observing that
\begin{align*}
    \sum_{h=1}^t (K_1+\dots +K_h)(\mu_h-\mu_{h+1}) = K_1\mu_1 + \dots + K_t \mu_t = K.
\end{align*}

\begin{proof}[Proof of Theorem~$\ref{mvt}$]
    Set $\cA = [1,P]$ or $\cA=\cA(P,R)$, and write
    \begin{align*}
	  s_0(h) = \sum_{l=1}^h (\mu_l-\mu_{l+1})u_l \quad (1 \le h \le t) \quad \text{and } \quad s_0(0)=0,
    \end{align*}
    so that $s_0(t)= s_0$.

    First of all, by making a trivial estimate and applying the trivial inequality
    \begin{align}\label{trivineq}
	|z_1 \cdots z_n| \le |z_1|^n + \dots + |z_n|^n,
    \end{align}
    we find that
    \begin{align}\label{diag}
	I_{\u,\k,\M}(\cA) \ll \int_{[0,1)^r} \prod_{h=1}^t \prod_{m=1}^{\mu_h-\mu_{h+1}} |f(\G_{j(h,m)}; \cA)|^{2u_h} \, \dr \A
    \end{align}
    for some $j(h,m) \in \cB_{h,m}$.  Observe that the mean value on the right hand side of \eqref{diag} counts solutions to the system
    \begin{align}\label{eq1}
	\sum_{h=1}^t \sum_{m=1}^{\mu_h-\mu_{h+1}} c_{i,j(h,m)} \xi_{h,m}(k_{l,n}) = 0 \quad (i \in \cI_{l,n}, 1 \le n \le \nu(l),1 \le l \le t),
    \end{align}
    where we wrote
    \begin{align*}
	\xi_{h,m}(k) = \sum_{j \in \cB(h,m)} (-1)^j x_j^{k}.
    \end{align*}
    We now choose the sets $\cJ_l$ occurring in \eqref{rankcond} according to \eqref{diag} as
    \begin{align*}
    		\cJ_l = \{j(h,m): 1 \le m \le \mu_h-\mu_{h+1}, l \le h \le t\},
    \end{align*}
    where $j(h,m) \in \cB_{h,m}$ for all $h$ and $m$.
    Let $\cJ = \cJ_{1}$, write $C_{l,n}$ for the $(\mu_l \times M)$-matrix defined by
    \begin{align}\label{Cdef}
			C_{l,n} = (c_{ij})_{i \in \cI_{l,n}, j \in \cJ} \qquad (1 \le n \le \nu(l), \ 1 \le l \le t),
    \end{align}
    and let
    \begin{align*}
	\X_{l,n} = \big(\xi_{h,m}(k_{l,n}) \big)_{1 \le m \le \mu_h-\mu_{h+1}, l \le h \le t} \in \Z^{\mu_l}.
    \end{align*}
    Then the system \eqref{eq1} can be written more compactly as
    \begin{align*}%\label{eq2}
	C_{l,n} \X_{l,n} = {\boldsymbol 0} \quad (1 \le n \le \nu(l), 1 \le l \le t).
    \end{align*}

    We prove the statement by induction. Consider the case $l=1$.   In view of the nonsingularity condition \eqref{rankcond}, we have $\det C_{1,n} \neq 0$ for $1 \le n \le \nu(1)$, and it follows that the equations
    \begin{align*}
	C_{1,1} \X_{1,1} = \dots = C_{1,\nu(1)} \X_{1,\nu(1)} = \bm 0
    \end{align*}
    are satisfied if and only if
    \begin{align}\label{decoup}
	\X_{1,1} = \dots =  \X_{1,\nu(1)} = \bm 0.
    \end{align}
    Consider now those equations within \eqref{decoup} that correspond to $h=1$.  On recalling that $|\cB(1,m)|=2u_1$, we see that this subsystem consists of $\mu_1-\mu_2$ copies of the system
    \begin{align*}
	  \sum_{j=1}^{2u_1} (-1)^j x_j^{k_{1,n}} = 0 \quad (1 \le n \le \nu(1)),
    \end{align*}
    whose solutions are counted by the mean value $J_{u_1,\k_1}(\cA)$. It follows that the total number of solutions of the subsystem corresponding to $h=1$ is given by $(J_{u_1,\k_1}(\cA))^{\mu_1-\mu_2}$.

%    Furthermore, one also has the relations \eqref{decoup} for $2 \le h \le t$, which we exploit in due course.

Suppose now that for some $l$ with $2 \le l \le t$ the systems
    \begin{align}\label{decoup2}
	\X_{h,1} = \dots =  \X_{h,\nu(h)} = \bm 0 \quad (1 \le h \le l-1),
    \end{align}
    have been solved, so that all variables $x_{j}$ with $j \in \cB_{h,m}, 1 \le m \le \mu_h-\mu_{h+1}, 1 \le h \le l-1$ are determined.
    This fixes the values of $\xi_{h,m}(k_{l',n})$ for all $1 \le m \le \mu_h- \mu_{h+1}$, $1 \le h \le l-1$ for all degrees $k_{l',n}$ with $1 \le n \le \nu(l')$, $l \le l' \le t$. We now seek to solve the subsystem associated to the degrees $k_{l,1}, \dots, k_{l, \nu(l)}$. Upon writing
    \begin{align*}
	\a_{l,n} = \bigl(\xi_{h,m}(k_{l,n})\bigr)_{1 \le m \le \mu_h-\mu_{h+1}, 1 \le h \le l-1} \in \Z^{M-\mu_l}
    \end{align*}
    for the vector of variables already determined, then the system is of the shape $C_{l,n} \ZZ_{l,n} = \bm 0$ for $1 \le n \le \nu(l)$, where $\ZZ_{l,n} = (\a_{l,n}; \X_{l,n})$. The nonsingularity condition implies that $C_{l,n} = [A_{l,n} | B_{l,n}]$, where $B_{l,n}$ is a $(\mu_l \times \mu_l)$-matrix with $\det (B_{l,n})  \neq 0$ for $1 \le n \le \nu(l)$.  Hence the system in question is equivalent to the system
    \begin{align}\label{eqh}
	B_{l,n}  \X_{l,n}  + A_{l,n} \a_{l,n} = \bm 0 \quad (1 \le n \le \nu(l)).
    \end{align}
    Write $\p = -( A_{l,n} \a_{l,n})_{1 \le n \le \nu(l)} \in \Z^{\rho_l}$ and $\A_l$ for the vector comprising those components of $\A$ corresponding to the set $\cI_l$.  Further, write $\k(l)=(k_{l,1}, \dots, k_{l,\nu(l)})$ and set $f=f_{\k(l)}$.  Then the number of solutions of the system \eqref{eqh} is given by
    \begin{align*}
	&\int_{[0,1)^{\rho_l}} \prod_{h=l}^t \prod_{m=1}^{\mu_h-\mu_{h+1}} |f(\G_{j(h,m)} ;\cA )|^{2u_h} e(\A_l \cdot \p) \, \dr \A_l \\
	&\qquad \qquad \le   \int_{[0,1)^{\rho_l}} \prod_{h=l}^t \prod_{m=1}^{\mu_h-\mu_{h+1}} |f(\G_{j(h,m)} ;\cA )|^{2u_h} \, \dr \A_l,
    \end{align*}
    and here the latter integral counts solutions of the system
    \begin{align*}
	B_{l,n}  \X_{l,n}= \bm 0 \quad (1 \le n \le \nu(l)).
      \end{align*}
    Since the non-singularity condition implies that $\det(B_{l,n}) \ne 0$, we therefore deduce that the number of solutions to \eqref{eqh} is bounded above by the number of solutions of the system
    \begin{align*}
	\X_{l,n} = \bm 0 \quad (1 \le n \le \nu(l)),
    \end{align*}
    and the contribution stemming from the case $h=l$ can be interpreted as $\mu_l-\mu_{l+1}$ copies of the system
    \begin{align*}
	\sum_{j =1}^{2u_l} (-1)^j x_j^{k_{l,n}} = 0 \quad (1 \le n \le \nu(l)).
    \end{align*}
    Combining this with (\ref{decoup2}), we find that the number of choices for the variables in each of the blocks $\cB(l,m)$ with $1 \le m \le \mu_l-\mu_{l+1}$ is bounded above by the mean value $J_{u_l,\k_l}(\cA)$.
    It now follows by induction that
    \begin{align*}
	I_{\u,\k,\M}(\cA) \ll \prod_{l=1}^t  (J_{u_l,\k_l}(\cA))^{\mu_{l}-\mu_{l+1}},
    \end{align*}
    and this completes the proof of the theorem.
\end{proof}

\section{The minor arcs}

We now describe our Hardy-Littlewood dissection.  For the purpose of the very general Theorem~\ref{mainthm} we can afford to economize on effort by working exclusively with a narrow set of major arcs.  The weakness of the ensuing minor arc estimates is of little consequence to the quality of our bounds, and we avoid pruning arguments.

We take $X \le P$ to be a parameter tending to infinity with $P$.  Define the major arc
\begin{align*}
    \fM(q,\a;X) = \{\A \in [0,1)^r: |q\alpha_i-a_i| \le XP^{-d_i}, 1\le i \le r\},
\end{align*}
and write $\fM(X)$ for the union of all $\fM(q,\a;X)$ with $1 \le \a \le q$, $(q,\a)=1$, and $1 \le q \leqslant X$.  We then write $\fm(X) = [0,1)^r \setmin \fM(X)$ for the minor arcs.

We establish a Weyl-type estimate by exploiting the non-singularity condition for an $M$-tuple of exponential sums.
\begin{lemma} \label{weylest}
    Suppose that $\A \in \fm(X)$. Then there exists $\sigma > 0$ such that for each $M$-tuple $(j_1, \dots, j_M)$ of distinct indices there exists an index $j_i$ for which one has
    \begin{align*}
	|f_{j_i}(\A; [1,P])| \le P X^{-\sigma}.
    \end{align*}
\end{lemma}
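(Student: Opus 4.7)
The plan is to argue by contrapositive: assume there exists an $M$-tuple $(j_1, \dots, j_M)$ of distinct indices with $|f_{j_i}(\A; [1,P])| > P X^{-\sigma}$ for every $i = 1, \dots, M$, and deduce $\A \in \fM(X)$ once $\sigma$ is chosen small enough. The argument splits naturally into an analytic input (rational approximations to the $\gamma$-coefficients, extracted by iterated Weyl-type estimates) and an algebraic input (the highly non-singular hypothesis, which converts these into simultaneous rational approximations to the underlying $\alpha_i$).

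For the analytic step, each $f_{j_i}$ is a polynomial exponential sum attached to $\phi_{j_i}(x) = \sum_{l,n} \gamma_{j_i,l,n} x^{k_{l,n}}$, of maximal degree $k$. A single application of Weyl's inequality furnishes a rational approximation to the leading coefficient $\gamma_{j_i, l^*, n^*}$ for which $k_{l^*, n^*} = k$; peeling this term off by the substitution $x = q y + z$ over residues $z \bmod q$ reduces the remaining exponential sum to one of strictly smaller maximal degree, to which Weyl may be applied again. Iterating through the $\varpi_t$ distinct exponents produces, for each $i$, a positive integer $q_i \le X^{C\sigma}$ and integers $b_{j_i, l, n}$ with
\[
|q_i \gamma_{j_i, l, n} - b_{j_i, l, n}| \le X^{C\sigma} P^{-k_{l,n}} \qquad (1 \le n \le \nu(l),\; 1 \le l \le t),
\]
where $C$ depends only on $k$ and the exponents $k_{l,n}$. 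Taking $Q$ to be the least common multiple of $q_1, \dots, q_M$ then yields a common-denominator approximation of size $X^{C'\sigma}$, valid uniformly in $i$ and in $(l,n)$.

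For the algebraic step, I would fix $(l, n)$ and recall from \eqref{gamalp} that the $\gamma_{j_i, l, n}$ are linear forms in the $\mu_l$ unknowns $\{\alpha_{i'} : i' \in \cI_{l, n}\}$. The highly non-singular hypothesis applied to $\cJ_l = \{j_1, \dots, j_{\mu_l}\}$ guarantees that the associated $\mu_l \times \mu_l$ coefficient matrix is invertible with $O(1)$ entries, so each $\alpha_{i'}$ with $i' \in \cI_{l,n}$ can be written as a bounded rational combination of $\gamma_{j_1, l, n}, \dots, \gamma_{j_{\mu_l}, l, n}$. Clearing the (bounded) common denominator $D$ of the inverse and substituting the approximation from the previous step yields integers $a_{i'}$ with $|DQ \alpha_{i'} - a_{i'}| \ll X^{C'\sigma} P^{-d_{i'}}$. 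Running this over every $(l, n)$ covers all indices $1 \le i \le r$ with the single denominator $DQ$; choosing $\sigma$ sufficiently small that $DQ \le X$ and that the implied constant is absorbed into an additional factor of $X^{O(\sigma)} \le X$ places $\A$ in $\fM(X)$, contradicting the hypothesis.

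The chief technical obstacle is the iterated Weyl input, since one must verify that the denominator compounds only by polynomially bounded factors of $X^\sigma$ through each peeling stage, and that the auxiliary sums obtained after substitution remain large enough to support the next application of Weyl. Once these bookkeeping details are handled, the passage to a common denominator and the linear-algebra consolidation afforded by the highly non-singular hypothesis are routine.
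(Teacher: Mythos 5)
Your argument is correct and follows essentially the same route as the paper: argue by contrapositive, obtain a simultaneous rational approximation $\lVert q\gamma_{j_i,l,n}\rVert \ll X^{c\sigma}P^{-k_{l,n}}$ with a common denominator $q \ll X^{c\sigma}$, and then invert the non-singular $\mu_l\times\mu_l$ blocks supplied by \eqref{rankcond} to transfer the approximation to the $\alpha_i$ (your explicit clearing of the bounded denominator of the inverse matrix is, if anything, slightly more careful than the paper's wording). The only substantive difference is that the paper obtains the analytic input as a black box by citing Lemma~2.4 of \cite{P:02ineq3} (taking $\sigma<1/(2k)$ and $q \ll X^{2k\sigma}$), whereas you sketch a from-scratch iterated-Weyl derivation whose denominator bookkeeping is precisely what that cited lemma encapsulates.
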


\begin{proof}
     Fix $\j$ as in the statement of the lemma, let $\sigma < 1/(2k)$, and suppose that for some $\A \in [0,1)^r$ one has $|f_{j_i}(\A; [1,P])| \geqslant PX^{-\sigma}$ for each $i=1, \dots, M$. Then \cite{P:02ineq3}, Lemma~2.4, implies that there exists $q \ll X^{2k\sigma}$ for which
    \begin{equation} \label{gammapprox}
	\lVert q\gamma_{j_i,l,n} \rVert \ll X^{2k\sigma} P^{-k_{l,n}} \quad (1 \le n \le \nu(l), 1 \le l \le t, 1 \le i \le M).
    \end{equation}
    For ease of reference to the coordinate transform matrices defined in the previous section, we find it convenient to partition the indices as in (\ref{partition}), with $j_1, \dots, j_M$ occurring in distinct blocks.
    Thus we write $\j = (j_i)_{1 \le i \le M} = (j(m,h))_{1\le m \le \mu_h-\mu_{h+1},  1 \le h \le t}$, and for each $l$ and $n$ write
    \begin{align*}
	\G_{l,n}^{*} = \Big((\gamma_{j(h,m),l,n})_{\substack{1 \le m \le \mu_{h} - \mu_{h+1} \\ l \le h \le t }} \Big)^T \qquad     \mbox{and} \qquad \A_{l,n} = (\alpha_{r_{l,n-1}+1}, \dots, \alpha_{r_{l,n}})^T.
    \end{align*}
    We also write $\G_{l,n}$ for the extension of $\G_{l,n}^*$ to all $1 \le h \le t$.  Then the relations \eqref{gamalp} give
    \begin{align*}
	\G_{l,n} = C_{l,n}^T \A_{l,n} \quad ( 1 \le n \le \nu(l), 1 \le l \le t),
    \end{align*}
    where $C_{l,n}=[A_{l,n}|B_{l,n}]$ is the $\mu_l \times M$ coefficient matrix defined in \eqref{Cdef}. It follows from \eqref{rankcond} that $\det(B_{l,n}) \neq 0$ and hence that
    \begin{align*}
	\A_{l,n} = (B_{l,n}^T)^{-1} \G^{*}_{l,n} \quad (1 \le n \le \nu(l), 1\le l \le t).
    \end{align*}
    Thus for each $i$ with $r_{l, n-1}+1 \le i \le r_{l,n}$, one has
    \begin{align*}
	\alpha_i = \sum_{h=l}^t \sum_{m=1}^{\mu_h-\mu_{h+1}} b_{j(h,m),i} \gamma_{j(h,m),l,n},
    \end{align*}
    where the $ b_{j(h,m),i}$ are entries of the matrix $(B_{l,n}^T)^{-1}$ whose moduli are hence bounded above by some absolute constant.
    It follows from \eqref{gammapprox} that
    \begin{align*}
	\lVert q\alpha_i \rVert \le \sum_{h=l}^t \sum_{m=1}^{\mu_l-\mu_{l+1}} |b_{j(h,m),i}| \lVert q\gamma_{j(h,m),l,n} \rVert \ll X^{2k\sigma} P^{-k_{l,n}}.
    \end{align*}
    We therefore deduce that $\A \in \fM(X)$ for $X$ sufficiently large, and the result follows.
\end{proof}

We now complete the analysis of the minor arcs for Theorem~\ref{mainthm}. For case \eqref{asymp}, we set $s =2s_0+1$, write $f_j(\A) = f_j(\A; [1,P])$, and set \begin{equation}
\label{tNdef}
    \tN_{s,\k,\M}(\fB) = \int_{\fB} \prod_{j=1}^s  f_j(\A) \, \dr \A.
    \end{equation}
For $j=1, \dots, M$ and $\sigma > 0$, let $\fm^{(j)}$ denote the set of $\A \in [0,1)^r$ for which $|f_j(\A)| \le PX^{-\sigma}$. For a given index $j$, we
partition the remaining $2s_0$ indices into blocks $\cB_{h,m}$ with $|\cB_{h,m}| = 2u_h$, where $u_h$ is as in Corollary \ref{mvtcor} with $\cA=[1,P]$, so that
%Upon writing
%\begin{align*}
%    \tN_{s,\k,\M}(\fB) = \int_{\fB} f_i(\A) \prod_{h=1}^t \prod_{m=1}^{\mu_h-\mu_{h+1}} \prod_{j \in \cB_{h,m}} %f_j(\A) \, \dr \A,
%\end{align*}
%and choose $\j \in \cB_{1,1} \times\dots\times\cB_{t,\mu_t}$.
\begin{align}\label{cleverweyl}
    \tN_{s,\k,\M}(\fm^{(j)}) \ll P X^{-\sigma} I_{\u,\k,\M}([1,P]).
\end{align}
%Since we assumed the system to be highly non-singular, we have
%\begin{align*}
%		N_{2s_0,\k,\M}([0,1)) \ll I_{\u,\k,\M}([1,P]),
%\end{align*}
Lemma~\ref{weylest} ensures that there exists $\sigma$ for which $\fm \subseteq \fm^{(1)} \cup \cdots \cup \fm^{(M)}$,
and it follows from Corollary \ref{mvtcor} that whenever $X$ is a small power of $P$ and $\ep$ is small enough one has
\begin{align*}
    \tN_{s,\k,\M}(\fm) \ll P X^{-\sigma} P^{2s_0-K+\ep} \ll P^{s-K}X^{ - \sigma/2}.
\end{align*}
In case \eqref{exist} we set $s=2s_0+M$ and partition the indices $j = M+1, \dots, s$ as before, but with the block sizes determined by Corollary \ref{mvtcor} with $\cA=\cA(P,R)$.  Here we write
\begin{align*}
    N_{s,\k,\M}(\fB) = \int_{\fB} \prod_{i=1}^M f_i(\A) \prod_{h=1}^t \prod_{m=1}^{\mu_h-\mu_{h+1}} \prod_{j \in \cB_{h,m}} g_j(\A) \, \dr \A,
\end{align*}
where we suppose that $f_i(\A) = f_i(\A; [1,P])$ for $1 \le i \le M$ and $g_j(\A) = f_j(\A,\cA(P,R))$ for $M+1 \le j \le s$. Then it follows from Lemma~\ref{weylest} that
\begin{align*}
    N_{s,\k,\M}(\fm) \ll P^{M} X^{-\sigma} I_{\u,\k,\M}(\cA(P,R)),
\end{align*}
and when $\ep$ is sufficiently small an application of Corollary \ref{mvtcor} delivers the bound
\begin{equation}\label{minbd}
N_{s,\k,\M}(\fm) \ll P^{s-K} X^{-\sigma/2}.
\end{equation}
This completes the analysis of the minor arcs in the setting of Theorem~\ref{mainthm}.

\section{The major arcs}

We complete the proof of Theorem~\ref{mainthm} by obtaining the expected contribution from our thin set of major arcs. Although the analysis is in principle relatively routine, the combination of repeated and differing degrees requires us to exercise some care in adapting existing approaches.  As with our minor arc estimates, we make critical use of the non-singularity condition to extract non-singular sub-matrices of coefficients.

Set $X=(\log P)^{1/(6r)}$ if $\cA=\cA(P,R)$ or $X = P^{1/(6r)}$ when $\cA = [1,P]$, and consider the slightly expanded major arcs
\begin{align*}
    \fN(X) = \bigcup_{q=1}^X \bigcup_{\substack{\a =1  \\ (q, \a)=1}}^q \fN(q,\a;X),
\end{align*}
where $\fN(q,\a;X)$ is given by the set of all $\A  \in [0,1)^r$ satisfying
\begin{align*}
     |\alpha_{l,n}-q^{-1}a_{l,n}| \le XP^{-k_{l,n}} \quad (1 \le n \le \nu(l), 1 \le l \le t).
\end{align*}
Then $\fn (X)=[0,1)^r\setminus \fN(X) \subseteq \fm(X)$, and the work of the previous section implies that the contribution of the minor arcs is negligible compared to the expected main term.

We write
\begin{align*}
    S(q,\a) = \sum_{x=1}^q e\left((a_{1,1} x^{k_{1,1}} + \dots + a_{t,\nu(t)} x^{k_{t,\nu(t)}})/q\right)
\end{align*}
and recall that the argument of \cite{V:HL}, Theorem~7.1 (see also \cite{P:02ineq3}, equation (2.2)) gives
\begin{align}\label{Sqaest}
    S(q,\a) \ll (q,\a)^{1/k} q^{1-1/k+\ep}.
\end{align}

Further define $\omega=0$ if $\cA=[1,P]$ and $\omega=1$ when $\cA = \cA(P,R)$,
% \begin{align*}
%     \omega= \begin{cases} 0 & {\rm if} \ \cA=[1,P] \\ 1 & {\rm if} \ \cA = \cA(P,R), \end{cases}
% \end{align*}
and set
\begin{align*}
    v(\B;P) = \int_{\omega R}^P \rho\biggl(\frac{\log z}{\log R}\biggr)^{\omega} e(\beta_{1,1}z^{k_{1,1}} + \dots + \beta_{t,\nu(t)}z^{k_{t,\nu(t)}}) \, \dr z,
\end{align*}
where $\rho$ denotes Dickman's function.
We recall from the arguments of \cite[Theorem~7.3]{V:HL} and \cite[Lemma~8.6]{W:91sae2} (see also \cite{P:02ineq3}, equations (2.3) and (2.4)) the estimate
\begin{align}\label{vest}
    v(\B; P) \ll P\bigg(1+\sum_{l=1}^t \sum_{n=1}^{\nu(l)}|\beta_{l,n}|P^{k_{l,n}}\bigg)^{-1/k}.
\end{align}
It then follows easily that when $\A  = \a/q + \B \in \fN(q,\a,X) \subseteq \fN(X)$, one has
\begin{align*}
    f_j(\A) = q^{-1} S(q,\LL_j) v(\D_j;P) + O(X^2 P^{\omega} (\log P)^{-\omega}),
\end{align*}
where
\begin{align}\label{lama}
    \Lambda_{j,l,n} = \sum_{i=r_{l,n-1}+1}^{r_{l,n}} c_{ij} a_i   \quad (1 \le j \le s, 1 \le n \le \nu(l), 1 \le l \le t)
\end{align}
and
\begin{align}\label{delbet}
    \delta_{j,l,n} = \sum_{i=r_{l,n-1}+1}^{r_{l,n}} c_{ij} \beta_i  \quad (1 \le j \le s, 1 \le n \le \nu(l), 1 \le l \le t),
\end{align}
so that $\D=\G-\LL/q$.  We write $S_j(q,\a)=S(q,\LL_j)$ and $v_j(\B; P)=v(\D_j; P)$, and define
\begin{align*}
    \fS(X) = \sum_{q \leqslant X} \sum_{\substack{1 \leqslant \a \leqslant q \\ (q,\a)=1}} \prod_{j=1}^s q^{-1} S_j(q,\a)
%\end{align*}
 \quad\text{and}\quad
%\begin{align*}
    \fJ(X) = \int_{\cI(P,X)} \prod_{j=1}^s v_j(\B;P) \, \dr \B,
\end{align*}
with
\begin{align*}
    \cI(P,X) = \bigtimes_{l=1}^t \bigtimes_{n=1}^{\nu(l)}  [-XP^{-k_{l,n}}, XP^{-k_{l,n}}]^{\mu_l}.
\end{align*}
Then since $\vol \fN(X) \ll X^{2r+1}P^{-K}$, one finds that
\begin{align}\label{majorasymp}
\int_{\fN(X)} f_1(\A) \cdots f_s(\A) \, \dr \A = \fS(X) \fJ(X) + O(P^{s-K}(\log P)^{-\nu})
\end{align}
for some $\nu > 0$.

We now show that one can complete the singular series and singular integral as usual by defining, for each fixed $P$,
\begin{align} \label{completions}
    \fS = \lim_{Y \to \infty} \fS(Y) \quad \mbox{and} \quad \fJ=\lim_{Y \to \infty} \fJ(Y).
\end{align}

We first complete the singular series.
Write
\begin{align*}
  A(q) = q^{-s}\sum_{\substack{1 \leqslant \a \leqslant q \\ (q,\a)=1}} \prod_{j=1}^s  S_j(q,\a),
\end{align*}
and note that $A(q)$ is multiplicative in $q$, whence the singular series, if convergent, can be written as
\begin{align}\label{ss-conv}
 \fS = \prod_p \sum_{i=0}^\infty A(p^i).
\end{align}
We show that the product in \eqref{ss-conv} converges.

\begin{lemma}\label{ss}
		Suppose the system $(\ref{sys})$ is highly non-singular with $s>2s_0$, where $s_0$ is given by \eqref{s0def} with
		\begin{align*}
		    u_h \ge \frac{k}{2}(1+\varpi_h) \qquad (1 \le h \le t).
		\end{align*}
		Then the singular series is absolutely convergent, and one has $\fS - \fS(X) \ll X^{-\delta}$ for some $\delta > 0$.
%Furthermore, for systems of quadratic and cubic equations it is sufficient to have $\u$ according to \eqref{par2>3} or \eqref{par3>2}, respectively.		
\end{lemma}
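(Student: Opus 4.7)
The plan is to establish the pointwise bound $|A(q)| \ll q^{-1-\delta}$ for some $\delta > 0$, from which the lemma follows readily. Indeed, multiplicativity of $A$ then gives $\sum_q |A(q)| < \infty$ (so that the Euler product $\fS = \prod_p(1+\sum_{i\ge 1}A(p^i))$ converges absolutely), and a direct summation yields $|\fS - \fS(X)| \le \sum_{q>X}|A(q)| \ll X^{-\delta}$.

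To derive the pointwise bound, I would apply \eqref{Sqaest} to each factor $S_j(q, \a)$, obtaining
$$|A(q)| \ll q^{-s/k + s\ep} \sum_{\a \pmod q} \prod_{j=1}^s (q,\LL_j)^{1/k}.$$
The multiplicative sum is then handled by the M\"obius inversion $(q, \LL_j)^{1/k} = \sum_{d_j \mid (q,\LL_j)} g(d_j)$ (with $g = \mu \ast n^{1/k}$, so that $|g(d)| \ll \tau(d) d^{1/k}$), combined with a change of variables $(a_i)_{i\in\cI_{l,n}} \mapsto (\Lambda_{j,l,n})_{j \in \cJ_l}$ for each block $(l,n)$, enabled by the non-singularity of the $(\mu_l \times \mu_l)$-submatrices asserted by \eqref{rankcond}. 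Choosing nested $\mu_l$-tuples $\cJ_1 \supseteq \cdots \supseteq \cJ_t$ with $|\cJ_l|=\mu_l$, the number of $\a$ satisfying $d_j \mid \LL_j$ simultaneously is bounded by $q^r/\prod_{j\in\cJ_1} d_j^{\varpi_{l^*(j)}}$ with $l^*(j) = \max\{l : j\in\cJ_l\}$. The elementary estimate $\sum_{d \mid q}|g(d)|/d^{\varpi} \ll q^\ep$ (valid for $\varpi \ge 1$) then controls the contribution of the $M$ indices in $\cJ_1$. The input $s > 2s_0 \ge k(M+r)$---which follows from $u_h \ge \frac{k}{2}(1+\varpi_h)$ via the telescoping identities $\sum_h(\mu_h - \mu_{h+1}) = M$ and $\sum_h \varpi_h(\mu_h - \mu_{h+1}) = r$---then provides the excess needed to push the final exponent below $-1$.

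The main obstacle is extracting a saving from the $s - M$ indices lying outside $\cJ_1$, where the direct change of variables is unavailable. To handle these, I would combine individual Weyl bounds with the discrete Hua-type inequality
$$\sum_{\a \in (\Z/q\Z)^{\varpi_h}} |S_{\k_h}(q, \a)|^{2u_h} \ll q^{2u_h + \ep},$$
which is the precise consequence of \eqref{Sqaest} together with the hypothesis $u_h \ge \frac{k}{2}(1+\varpi_h)$, coupled with a congruence analog of the decoupling argument from Theorem~\ref{mvt}. This allows a subset of the remaining indices to be treated as a mean-value block, thereby recovering the saving required to drive the final exponent strictly below $-1$ and completing the proof.
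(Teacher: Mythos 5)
Your framework is the right one (bound $A(q)$ pointwise at prime powers, use multiplicativity, sum the tail), and you have correctly identified the two structural ingredients: the change of variables $\a_{l,n}\mapsto(\Lambda_{j,l,n})_{j\in\cJ_l}$ enabled by \eqref{rankcond}, and the count $\ll q^r\prod_j d_j^{-\varpi_{l^*(j)}}$, which matches the paper's $\Xi(p^i,\e)$ computation. But there is a genuine gap exactly where you flag "the main obstacle": the $s-M$ factors $(q,\LL_j)^{1/k}$ with $j\notin\cJ_1$. These are not a routine detail. For such $j$ the map $\a\mapsto\LL_j$ has a large kernel modulo $q$, so even under the constraint $(q,\a)=1$ one can have $\LL_j\equiv\bm 0\mmod q$ and hence $S_j(q,\a)=q$ on a set of $\a$ of size about $q^{r-\varpi_t}$; bounding these factors trivially gives $|A(q)|\ll q^{r-M/k+\ep}$, which is useless. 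Your proposed repair does not close this: the "discrete Hua-type inequality" $\sum_{\a}|S_{\k_h}(q,\a)|^{2u_h}\ll q^{2u_h+\ep}$ is true under the hypothesis, but it is dominated by the degenerate term $\a\equiv\bm 0$ (for which $S=q$), so it carries no saving by itself; a "congruence analog of the decoupling" applied without the coprimality constraint can only yield $\sum_{\a}\prod_{h,m}|S_{j(h,m)}(q,\a)|^{2u_h}\ll q^{2s_0+\ep}$, which, after reinserting the normalization $q^{-s}$ and the $s-2s_0$ unused factors, gives $|A(q)|\ll q^{\ep}$. The entire difficulty is how the single condition $(q,\a)=1$ is converted into a saving distributed over all $s$ factors, and the sketch does not engage with that.

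The paper's resolution is to make this problem disappear before the change of variables: it applies the elementary inequality \eqref{trivineq} \emph{first}, replacing $\prod_{j=1}^{s}S_j(q,\a)$ by $\prod_{h,m}|S_{j(h,m)}(q,\a)|^{v_h}$ with $v_h=2u_h+(s-2s_0)/M$, so that only the $M$ representative indices $j(h,m)\in\cJ_1$ survive and $\sum_h v_h(\mu_h-\mu_{h+1})=s$. There are then no leftover indices at all, and the coprimality $(q,\a)=1$ transfers to $(q,\LL)\ll 1$ for the concatenated vector of the representatives. Note that this changes how the hypothesis $u_h\ge\frac{k}{2}(1+\varpi_h)$ is used: in your sketch it enters only through the aggregate inequality $s>k(M+r)$, whereas in the paper each representative carries the exponent $v_h>k(1+\varpi_h)$ individually, and this is exactly what is needed so that the worst case in the gcd sum (all but one $e_{h,m}$ equal to $i$, one forced small by coprimality) still yields $A(p^i)\ll p^{-i\min_h(v_h/k-\varpi_h)+\ep}\ll p^{-i(1+\tau)}$. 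I recommend restructuring your argument along these lines; as written, the proof is incomplete.
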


\begin{proof}
    We partition the indices as in (\ref{partition}) and
    let $v_h = 2u_h + (s-2s_0)/M > 2u_h$ for $1 \le h \le t$. Then one has
    \begin{align*}
	\sum_{h=1}^t v_h(\mu_h-\mu_{h+1}) = s,
    \end{align*}
    and hence by \eqref{trivineq} there exists $\j \in \cB_{1,1} \times \dots \times \cB_{t,\mu_t}$ with the property that
    \begin{align*}
	\sum_{\substack{1 \leqslant \a \leqslant q \\ (q,\a)=1}} \prod_{j=1}^{s}  S_j(q,\a) \ll  \sum_{\substack{1 \leqslant \a \leqslant q \\ (q,\a)=1}} \prod_{h=1}^{t} \prod_{m=1}^{\mu_h-\mu_{h+1}} |S_{j(h,m)}(q,\a)|^{v_h}.
    \end{align*}
    We now apply the change of variables \eqref{lama}. Thus, on writing $\a_{l,n}=(a_{r_{l,n-1}+1}, \dots, a_{r_{l,n}})^T$ and $\LL^*_{l,n} = (\Lambda_{j(h,m),l,n})^T_{h,m}$ with $1 \le m \le \mu_{h}-\mu_{h+1}$ and $l \le h \le t$, we obtain $\LL^*_{l,n} = B_{l,n}^T \a_{l,n}$, where the matrix $B_{l,n}$ is as in (\ref{eqh}). In particular, one has $\det B_{l,n} \neq 0$ for all $1 \le n \le \nu(l)$ and $1 \le l \le t$.  As a result, the remaining coefficients $\Lambda_{j(h,m),l,n}$ with $1 \le m \le \mu_h-\mu_{h+1}$ and $1 \le h \le l-1$ may be expressed as linear combinations of those $\Lambda_{j(h,m),l,n}$ having $h \ge l$. Then on writing
    \begin{align*}
	\LL_{j(h,m)}=(\Lambda_{j(h,m),l,n})_{\substack{1 \le n \le \nu(l) \\ 1 \le l \le t}} \qquad \text{and} \qquad \LL = (\LL_{j(h,m)})_{\substack{1 \le m \le \mu_h-\mu_{h+1} \\ 1 \le h \le t}},
    \end{align*}
    the invertibility of the transformation further implies that the coefficients of $\LL$ occurring in these relations satisfy $(q, \LL) \ll 1$ whenever $(q,\a)=1$. Hence there exist constants $C, C'$ for which
    \begin{align*}
	\sum_{\substack{1 \leqslant \a \leqslant q \\ (q,\a)=1}} \prod_{h=1}^{t} \prod_{m=1}^{\mu_h-\mu_{h+1}} |S_{j(h,m)}(q,\a)|^{v_h} &\ll  \sum_{\substack{|\LL| \leqslant Cq \\ (q,\LL) \le C'}} \prod_{h=1}^t\prod_{m=1}^{\mu_h-\mu_{h+1}}  |S(q,\LL_{j(h,m)})|^{v_h}.
    \end{align*}
    It follows from \eqref{Sqaest} that
    \begin{align*}
	A(p^i) &\ll p^{-is} \sum_{\substack{|\LL| \leqslant C p^i\\ (p^i,\LL) \le C'}} \prod_{h=1}^t\prod_{m=1}^{\mu_h-\mu_{h+1}}  |S(p^i,\LL_{j(h,m)})|^{v_h} \\
	& \ll p^{-is/k+\ep} \sum_{\substack{|\LL| \leqslant C p^i\\ (p^i,\LL) \le C'}} \prod_{h=1}^t\prod_{m=1}^{\mu_h-\mu_{h+1}} (p^i,\LL_{j(h,m)})^{v_h/k}.
    \end{align*}
    Let $\kappa(p)$ denote the largest integer satisfying $p^{\kappa(p)} \le C'$ and define $e_{h,m}$ via $(p^i,\LL_{j(h,m)}) = p^{e_{h,m}}$.  Then one has
    \begin{align*}
	A(p^i)  \ll p^{-is/k+\ep} \sum_{\e} \bigg(\prod_{h=1}^t\prod_{m=1}^{\mu_h-\mu_{h+1}}  p^{e_{h,m} v_h/k }\bigg) \Xi(p^i, \e),
    \end{align*}
    where the sum is over all $0 \le e_{h,m} \le i$ with the condition that $e_{h,m}\le \kappa(p)$ for at least one pair of indices $(h,m)$, and $\Xi(p^i, \e)$ denotes the number of $\LL \le Cp^i$ satisfying $(p^i,\LL_{j(h,m)}) = p^{e_{h,m}}$ for every $h$ and $m$. Recalling that for $h < l$ the coefficients $\Lambda_{j(h,m),l,n}$ are linearly dependent on $(\Lambda_{j(h,m),l,n})_{h \ge l}$, it suffices to determine the number of choices for those coefficients where $h \ge l$, in which case the number of choices for any given $\Lambda_{j(h,m),l,n}$ is certainly bounded above by $p^{i-e_{h,m}}$. It follows that
    \begin{align*}
	\Xi(p^i, \e) &\ll \prod_{l=1}^t \prod_{h=l}^t \prod_{m=1}^{\mu_h-\mu_{h+1}} \left( p^{i- e_{h,m} }\right)^{\nu(l)} \ll p^{ir} \prod_{h=1}^t\prod_{m=1}^{\mu_h-\mu_{h+1}} p^{- e_{h,m}  \varpi_h },
    \end{align*}
    where we used \eqref{r} and \eqref{def-varpi}.
    Thus altogether we have the estimate
    \begin{align*}
	A(p^i) \ll p^{-is/k+ir + \ep} \sum_{\e} \prod_{h=1}^t\prod_{m=1}^{\mu_h-\mu_{h+1}}  p^{e_{h,m} (v_h/k -\varpi_h)}.
    \end{align*}
    Observe that the sum over $\e$ essentially amounts to a divisor function with the additional constraint that at least one of the $e_{h,m}$ must be bounded above by $\kappa(p)$ in order to satisfy coprimality.  Thus after executing the summation one finds that $A(p^i) \ll p^{-i(s/k-r)+\xi+\ep}$, where
    \begin{align*}
	\xi &\le i \Big(\sum_{h=1}^t (\mu_h-\mu_{h+1}) (v_h/k - \varpi_h)\Big) - (i-\kappa(p))\min_{h}(v_h/k - \varpi_h)\\
	  & = i(s/k - r) - (i-\kappa(p))\min_{h}(v_h/k - \varpi_h ),
    \end{align*}
    and since $p^{\kappa(p)}$ is bounded by an absolute constant, we obtain
    \begin{align}\label{A-bd}
	A(p^i)&  \ll p^{-i\min_{h}(v_h/k - \varpi_h) + \ep}.
    \end{align}
    On recalling that
    \begin{align*}
	u_h \ge \frac{k}{2}(1+\varpi_h)\qquad (1 \le h \le t),
    \end{align*}
    the fact that $v_h >2u_h$ implies that one has $A(p^i) \ll p^{i(-1-\tau)}$ for some $\tau > 0$, uniformly for $i \in \N$.  It follows that
    \begin{align*}
	\fS = \prod_p \bigg(1 + \sum_{i=1}^{\infty} A(p^i)\bigg) \le \prod_p(1+ cp^{-1-\tau})
    \end{align*}
    for some constant $c>0$, and
    this establishes the convergence of the singular series in the setting of Theorem~\ref{mainthm}. The second assertion of the lemma follows immediately.
\end{proof}

We now turn to the completion of the singular integral.
\begin{lemma}\label{si}
    Suppose the system $(\ref{sys})$ is highly non-singular with $s>2s_0$, where $s_0$ is given by \eqref{s0def} with
    \begin{align*}
	u_h > \tfrac{1}{2} k \varpi_h \quad (1 \le h \le t).
    \end{align*}
    Then one has
    \begin{align*}
	\fJ - \fJ(X) \ll P^{s-K}X^{-1}.
    \end{align*}
\end{lemma}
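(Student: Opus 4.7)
The plan is to mimic the singular series calculation of Lemma~3.2 in the integral setting, exploiting the same block decomposition \eqref{partition} and non-singular change of variables from \eqref{eqh}.

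First, I would apply the trivial inequality \eqref{trivineq} within each block $\cB_{h,m}$, distributing the surplus $s - 2s_0$ indices evenly across the $M$ main blocks. This bounds $\prod_{j=1}^s |v_j(\B;P)|$ by a finite sum, over choices of representatives $j(h,m) \in \cB_{h,m}$, of products $\prod_{h,m}|v_{j(h,m)}(\B;P)|^{v_h}$, where $v_h = 2u_h + (s-2s_0)/M > 2u_h$ satisfies $\sum_h v_h(\mu_h - \mu_{h+1}) = s$. Fix one such tuple of representatives. For each $(l,n)$, I would then change variables $(\beta_i)_{i \in \cI_{l,n}} \to (\delta_{j(h,m),l,n})_{h \ge l}$: this is the linear substitution given by the $\mu_l \times \mu_l$ matrix $B_{l,n}^T$ from \eqref{eqh}, whose determinant is non-zero by the high non-singularity hypothesis \eqref{rankcond}. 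Writing $\delta^{*}_{l,n,h,m}$ for these new variables, the key observation is that in the factor $|v_{j(h,m)}|$ the dependence on coordinates $\delta_{j(h,m),l,n}$ with $l > h$ (which are linear combinations of new variables belonging to \emph{other} blocks) can simply be discarded from the estimate \eqref{vest}, since only positive terms are dropped from the sum inside the $-1/k$ power. This yields the block-factorisable bound
\[
|v_{j(h,m)}(\B;P)|^{v_h} \ll P^{v_h}\bigg(1 + \sum_{l \le h}\sum_{n=1}^{\nu(l)} |\delta^*_{l,n,h,m}| P^{k_{l,n}}\bigg)^{-v_h/k}.
\]

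For each $(h,m)$ factor, the substitution $\eta_{l,n} = P^{k_{l,n}} \delta^*_{l,n,h,m}$ contributes a Jacobian $P^{-(K_1+\dots+K_h)}$ and leaves the integral $\int_{\R^{\varpi_h}}(1 + \sum|\eta|)^{-v_h/k}\,\dr \eta$, which converges precisely because $v_h > 2u_h > k\varpi_h$. Multiplying across all $(h,m)$ and invoking the identity $\sum_h(\mu_h - \mu_{h+1})(K_1+\dots+K_h) = K$ gives convergence of $\fJ$ at the expected order $O(P^{s-K})$. For the tail bound, observe that $\B \notin \cI(P,X)$ forces some $|\beta_i| > XP^{-k_{l,n}}$ with $i \in \cI_{l,n}$, and since $\beta_i$ is a bounded linear combination of $\{\delta^*_{l,n,h,m}\}_{h \ge l}$, this forces $|\delta^*_{l,n,h,m}| \gg XP^{-k_{l,n}}$ for at least one $h \ge l$. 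In the corresponding $(h,m)$ factor, after integrating out the other $\varpi_h - 1$ variables to obtain $(1 + |\eta_{l,n}|)^{\varpi_h - 1 - v_h/k}$, the remaining one-dimensional tail $\int_{|\eta_{l,n}| \gg X}$ produces a saving of $X^{-(v_h/k - \varpi_h)}$, which is a strictly negative power of $X$ by hypothesis. Summing over the finitely many tuples of representatives and choices of $(l,n,i,h,m)$ yields the stated bound.

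The principal obstacle is the careful accounting of which $\delta_{j,l,n}$ coordinates become genuine new integration variables versus which become linear combinations of new variables lying in other blocks; once the ``outside'' coordinates are discarded in each factor, the integral factorises cleanly over the blocks $(h,m)$ and the argument reduces to a one-block analysis per factor. This block-factorisation, mirroring exactly the structural decoupling that drives the singular series estimate of Lemma~3.2, is what makes both the convergence of $\fJ$ and the quantitative tail saving go through.
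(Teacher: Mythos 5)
Your proposal is correct and rests on the same core ingredients as the paper's proof: the block partition \eqref{partition}, the reduction via \eqref{trivineq} to representatives $j(h,m)$, the non-singular change of variables $\D^*_{l,n}=B_{l,n}^T\B_{l,n}$, and the observation that the dependent coordinates $\delta_{j(h,m),l,n}$ (those which are linear combinations of genuine variables from other blocks) may be dropped from the lower bound inside the negative power in \eqref{vest}. Where you differ is in the final integration: the paper peels off the degree classes iteratively over $l=1,\dots,t$, at each stage integrating out the degree-$l$ variables, worsening the surviving exponents by $\nu(l)$, and invoking $u_h>\tfrac{k}{2}(\nu(1)+\dots+\nu(l))$ incrementally; you instead discard all dependent coordinates at the outset, so that the integral factorises in one step into a product over blocks $(h,m)$ of $\varpi_h$-dimensional integrals $\int_{\R^{\varpi_h}}(1+\sum|\eta|)^{-v_h/k}\,\dr\E$, each convergent precisely under $u_h>\tfrac12 k\varpi_h$, with the exponent bookkeeping handled by the single identity $\sum_h(\mu_h-\mu_{h+1})(K_1+\dots+K_h)=K$. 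Your one-shot factorisation is arguably cleaner and makes the role of the hypothesis more transparent; it also delivers an explicit tail saving $X^{-(v_h/k-\varpi_h)}$ rather than the unspecified $X^{-\nu}$ of \eqref{J-bd}. The only cosmetic points to tidy are that the exponents $v_h$ need not be integers, so the passage from $\prod_j|v_j|$ to the representative products should be phrased via H\"older rather than literally via \eqref{trivineq} (the paper's Lemma \ref{ss} has the same feature), and that one should note the constant Jacobian of the linear substitution $\B\mapsto\D^*$.
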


\begin{proof}
    Firstly, observe that by a change of variables one has
    \begin{align*}
	\fJ(X) &\ll P^{s-K} \int_{[-X,X]^r} \prod_{j=1}^s v_j(\B; 1) \, \dr \B.
    \end{align*}
    We now partition the indices as in (\ref{partition}).
    By \eqref{trivineq}, there exists $\j \in \cB_{1,1} \times \dots \times \cB_{t,\mu_t}$ with the property that
    \begin{align*}
	\fJ-\fJ(X) \ll P^{s-K}\int_{\cR} \prod_{h=1}^t\prod_{m=1}^{\mu_h-\mu_{h+1}} |v_{j(h,m)}(\B;1)|^{2u_h} \, \dr \B,
    \end{align*}
    where the set $\cR$ contains all vectors $\B$ satisfying
    \begin{align*}
	\displaystyle{ \max_{1 \le l \le t} \max_{1 \le n \le \nu(l) } \max_{i \in \cI_{l,n}} }\{ |\beta_{i}|\} > X.
    \end{align*}
    We make the change of variables \eqref{delbet}, and write
    \begin{align*}
	\B_{l,n}=(\beta_{r_{l,n-1}+1}, \dots, \beta_{r_{l,n}})^T \qquad \mbox{and} \qquad \D^*_{l,n} = \Big((\delta_{j(h,m),l,n})_{\substack{1\le m \le \mu_h-\mu_{h+1}\\l \le h \le t}}\Big)^T.
    \end{align*}
    We then find as above that $\D^*_{l,n} = (B_{l,n})^T \B_{l,n}$ and $\det B_{l,n} \neq 0$ $( 1 \le n \le \nu(l), 1 \leqslant l \leqslant t)$.
    Hence the remaining coordinates $\delta_{j(h,m),l,n}$ with $1 \le m \le \mu_h-\mu_{h+1}$ and $1 \le h \le l-1$ are linear combinations of those $\delta_{j(h,m),l,n}$ having $1 \le m \le \mu_h-\mu_{h+1}$ and $l \le h \le t$, and the non-singularity of the coordinate transform implies further that
    \begin{align*}
       \max_{1 \le l \le t} \max_{1 \le n \le \nu(l)} \max_{ l \le h \le t}\max_{1 \le m \le \mu_h - \mu_{h+1}}|\delta_{j(h,m),n,l}| \gg X
    \end{align*}
    whenever $\B \in \cR$. We will write $\D^{(i)}$ for the vector comprising all $\delta_{j(h,m),l,n}$ with $i \le l \le h \le t$, $1 \le n \le \nu(l)$, and $1 \le m \le \mu_h-\mu_{h+1}$.  After integrating with respect to those components of $\D=\D^{(1)}$ having $l=1$, one obtains from (\ref{vest}) that
    \begin{align*}
	\fJ  &\ll \int_{\R^{r}} \prod_{h=1}^t \prod_{m=1}^{\mu_h-\mu_{h+1}} \bigg(1+\sum_{l=1}^t \sum_{n=1}^{\nu(l)}|\delta_{j(h,m),l,n}|\bigg)^{-\frac{2u_h}{k}} \, \dr \D \\
	& \ll \int_{\R^{r-r_1}} \prod_{h=1}^t \prod_{m=1}^{\mu_h-\mu_{h+1}} \bigg(1+\sum_{l=2}^t \sum_{n=1}^{\nu(l)}|\delta_{j(h,m),l,n}|\bigg)^{-\frac{2u_h}{k} + \nu(1)} \, \dr \D^{(2)},
    \end{align*}
    provided that $u_h > \frac{k}{2}\nu(1)$ for all $h$. The resulting integral may be simplified by exploiting the fact that the variables $\delta_{j(1,m),l,n}$ with $l \ge 2$ are linear combinations of the components of $\D^{(2)}$. This implies that
    \begin{align*}
	\fJ &\ll \int_{\R^{r-r_1}} \prod_{h=2}^t \prod_{m=1}^{\mu_h-\mu_{h+1}} \bigg(1+\sum_{l=2}^t \sum_{n=1}^{\nu(l)}|\delta_{j(h,m),l,n}|\bigg)^{-\frac{2u_h}{k} +\nu(1) + (\mu_1-\mu_2)(-\frac{2u_1}{k} +\nu(1))  } \, \dr \D^{(2)}\\
	& \ll \int_{\R^{r-r_1}} \prod_{h=2}^t \prod_{m=1}^{\mu_h-\mu_{h+1}} \bigg(1+\sum_{l=2}^t \sum_{n=1}^{\nu(l)}|\delta_{j(h,m),l,n}|\bigg)^{-\frac{2u_h}{k} +\nu(1) } \, \dr \D^{(2)},
    \end{align*}
    where in the last step we used the assumption $u_1 > \frac{k}{2}\nu(1)$ again to simplify the exponent. We may now iterate the procedure for increasing values of $l$. Thus, provided that $u_h > \frac{k}{2}(\nu(1)+\nu(2))$ for all $h \ge 2$, the same argument yields
    \begin{align*}
	\fJ   &\ll \int_{\R^{r-r_2}} \prod_{h=2}^t \prod_{m=1}^{\mu_h-\mu_{h+1}} \bigg(1+\sum_{l=3}^t \sum_{n=1}^{\nu(l)}|\delta_{j(h,m),l,n}|\bigg)^{-\frac{2u_h}{k} +\nu(1) +\nu(2)  } \, \dr \D^{(3)}\\
	& \ll \int_{\R^{r-r_2}} \prod_{h=3}^t \prod_{m=1}^{\mu_h-\mu_{h+1}} \bigg(1+\sum_{l=3}^t \sum_{n=1}^{\nu(l)}|\delta_{j(h,m),l,n}|\bigg)^{-\frac{2u_h}{k} +\nu(1) + \nu(2)  } \, \dr \D^{(3)},
    \end{align*}
    and after $t$ iterations we obtain convergence if $u_h > \frac{1}{2}k \varpi_h \ (1 \le h \le t)$.
    Furthermore, it is clear that under the same condition one has
    \begin{align}\label{J-bd}
	\fJ-\fJ(X) \ll P^{s-K}\int_{\cR} \prod_{h=1}^t\prod_{m=1}^{\mu_h-\mu_{h+1}} |v_{j(h,m)}(\B,1)|^{2u_h} \, \dr \B \ll  P^{s-K} X^{-\nu}
    \end{align}
    for some $\nu > 0$.
\end{proof}

A coordinate transform now shows that $\fJ = P^{s-K} \chi_{\infty}$ with
\begin{align*}
    \chi_\infty = \int_{\R^r} \int_{[0,1]^s} e\Big(\sum_{i=1}^r \beta_i \Theta_i(\ZZ)\Big) \, \dr \ZZ  \, \dr \D,
\end{align*}
where where $\Theta_i(\x) = c_{i1}x_1^{d_i} + \dots + c_{is}x_s^{d_i}$, and it follows from Lemma~\ref{si} that $\chi_\infty$ is a finite constant. Furthermore, the argument of \cite[Lemma~7.4]{P:02pae} is easily adapted to prove that, under the conditions of Lemma~\ref{si}, this constant is positive whenever the system \eqref{sys} possesses a non-singular real solution in the positive unit hypercube.

%
% Now suppose that $\cA = [1,P]$ and that the equations \eqref{sys} have a non-singular positive real solution $\X$. By homogeneity, we may suppose after re-scaling that $\X \in (0,\tfrac{1}{2})^s$. Then we consider the box
% $\calB(\X) = [\frac12 \xi_1, 2\xi_1] \times \dots \times [\frac12 \xi_s, 2\xi_s]$. A coordinate transform shows that
% \begin{align*}
%     \fJ = P^{s-K}\int_{\R^r} \int_{\calB(\X)} e\Big(\sum_{i=1}^r \beta_i \Theta_i(\ZZ)\Big) \, \dr \ZZ  \, \dr \D,
% \end{align*}
% where $\Theta_i(\x) = c_{i1}x_1^{d_i} + \dots + c_{is}x_s^{d_i}$.
% By an application of Fourier's integral formula as in \cite[Chapter 4]{Dav:AM} and \cite[Lemma~30]{DL:69}, the integral
% \begin{align*}
%     \chi_\infty = \int_{\R^r} \int_{\calB(\X)} e\Big(\sum_{i=1}^r \beta_i \Theta_i(\ZZ)\Big) \, \dr \ZZ  \, \dr \D
% \end{align*}
% can be interpreted as the ($s-r$)-dimensional volume of the manifold described by \eqref{sys} inside $\calB(\X)$.
%In particular, the argument of Schmidt (see \cite[Lemma~2 and \S 11]{Sch:82quad} and \cite[\S 2]{Sch:82-4}) shows that $\chi_{\infty}>0$ whenever all degrees are odd.
Also, a standard argument yields
\begin{align*}
    \chi_p = \sum_{i=0}^\infty A(p^i) = \lim_{i \to \infty} p^{-i(s-r)} M(p^i),
\end{align*}
where $M(p^i)$ denotes the number of solutions of the congruences to the modulus $p^i$ which correspond to the equations \eqref{sys}. It follows from \eqref{A-bd} that $\chi_p = 1+O(p^{-1}) \ge \tfrac{1}{2}$ for $p$ sufficiently large, and for small primes one uses Hensel's lemma to deduce that $\chi_p>0$ if the system \eqref{sys} possesses a non-singular $p$-adic solution.  The proof of Theorem~\ref{mainthm} is now complete on recalling (\ref{minbd}), (\ref{majorasymp}), Lemma~\ref{ss}, and Lemma~\ref{si}, and the constant in \eqref{N-asymp} is given by $ C = \chi_\infty \prod_{p}\chi_p$.

\section{Auxiliary estimates for systems of cubics and quadratics}

The proof of Theorem~\ref{quadcubthm} requires a more careful treatment. In this section we collect a number of auxiliary results that will be of use when we complete the proof in the final section.
Here the system is given by
\begin{align}\label{sys23}
  c_{i1} x_1^3 + \dots + c_{is} x_s^3 &= 0 \quad (1 \le i \le r_C),\nonumber \\
  d_{i1} x_1^2 + \dots + d_{is}x_s^2 &= 0 \quad (1 \le i \le r_Q),
\end{align}
whence the relation \eqref{gamalp} reduces to
\begin{align}\label{gamalp2}
  \gamma_{2,j} = \sum_{i=1}^{r_Q} d_{ij}\alpha_{2,i} \quad \mbox{and} \quad \gamma_{3,j} &= \sum_{i=1}^{r_C}c_{ij}\alpha_{3,i},
\end{align}
and the exponential sum takes the shape
\begin{align*}
  f_{j}(\A; \cA) =  \sum_{x \in \cA} e( \gamma_{3,j} x^3 + \gamma_{2,j} x^2) = f(\G_j).
\end{align*}
Note in particular that, since the respective ranks of the coefficient matrices $(c_{ij})$ and $(d_{ij})$ are $r_C$ and $r_Q$, only $r=r_Q+r_C$ of the $2s$ entries of $\G$ are independent.

For $i \in \{2,3\}$ define
\begin{align*}
  \fM_i(X) = \bigcup_{1 \le q \le X} \{\alpha \in [0,1]: \|q\alpha\| < XP^{-i}\}
\end{align*}
and
\begin{align*}
  \fN_i(X) = \bigcup_{0 \le a < q \le X} \{\alpha \in [0,1]: |\alpha-a/q| < XP^{-i}\},
\end{align*}	
and write $\fM^*(X) = \fM_2(X) \times \fM_3(X)$. The respective complementary sets will be denoted with lower case letters, adorned with the same suffices or asterisks. Furthermore, for $X<Q$ write $\cM_i(Q, X) = \fM_i(Q)\setminus \fN_i(X)$ and $\cM^*(Q, X) = \fM^*(Q) \setminus \fN^*(X)$.
% \begin{align*}
%     \cM^*(Q, X) = (\fM_3(Q) \times \fM_2(Q)) \setminus (\fN_3(X) \times \fN_2(X)).
% \end{align*}

\begin{lemma}\label{aux}
    Suppose that $f(\A) = f(\A; [1,P])$, set $Q=P^{3/4}$, and let $Y$ be a positive number.
    \begin{enumerate}[(i)]
    \item For any $u > 2$, one has
      \begin{align*}
	\sup_{\alpha_{k_2} \in \fM_{k_2}(Q)} \int_{\cM_{k_1}(Q, Y)} |f(\A)|^{2u} \dr \alpha_{k_1} &\ll P^{2u-k_1}(Y^{-1}+P^{3/2-u+\ep}).
      \end{align*}
    \item For any $u>7$, one has
      \begin{align*}
	 			\int_{\cM^*(Q, Y)} |f(\A)|^{u} \dr \A &\ll P^{u-5}(Y^{-1} + P^{3-u/2+\ep}).
      \end{align*}
    \end{enumerate}
\end{lemma}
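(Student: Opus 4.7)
My plan rests on the standard major-arc approximation. On $\fM_2(Q)\times\fM_3(Q)$, writing $\alpha_i=a_i/q_i+\beta_i$ and setting $q=\mathrm{lcm}(q_2,q_3)$, we have
\[f(\A)=q^{-1}S(q,a_2,a_3)v(\B;P)+E(\A),\]
with $|q^{-1}S(q,a_2,a_3)|\ll q^{-1/3+\ep}$ (improved to $q^{-1/2+\ep}$ for appropriate cubic--quadratic moduli via Weil's theorem), $|v(\B;P)|\ll P(1+P^3|\beta_3|+P^2|\beta_2|)^{-1/3}$, and the error bound $|E(\A)|\ll q^{1/2+\ep}(1+P^3|\beta_3|+P^2|\beta_2|)^{1/2}$. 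The strategy for each part is to write $|f|^{2u}\ll|F|^{2u}+|E|^{2u}$ with $F:=q^{-1}Sv$, and to show that the main-term piece produces the $Y^{-1}$ saving while the error-term piece produces the $P^{3/2-u+\ep}$ (respectively $P^{3-u/2+\ep}$) saving.

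For part (i), I would fix $\alpha_{k_2}\in\fM_{k_2}(Q)$ and decompose $\cM_{k_1}(Q,Y)=\fM_{k_1}(Q)\setminus\fN_{k_1}(Y)$ into two sub-cases: those $\alpha_{k_1}$ whose rational approximation has denominator $q_1>Y$, and those with $q_1\le Y$ but $|\beta_{k_1}|\ge YP^{-k_1}$. Integrating the pointwise bound on $|F|^{2u}$ over $\beta_{k_1}$ and summing over $q_1$ in each regime extracts $\ll P^{2u-k_1}Y^{-1}$, using Weil-type control on the complete sum to secure adequate convergence of the $q$-summation. Separately, $\int_{\fM_{k_1}(Q)}|E|^{2u}\,\dr\alpha_{k_1}\ll P^{u+3/2-k_1+\ep}$ follows from the pointwise error bound, the measure $\ll Q^2P^{-k_1}$ of the major arcs, and the choice $Q=P^{3/4}$; this matches $P^{2u-k_1}\cdot P^{3/2-u+\ep}$.

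For part (ii), I would begin with the inclusion
\[\cM^*(Q,Y)\subseteq\bigl(\cM_3(Q,Y)\times\fM_2(Q)\bigr)\cup\bigl(\fM_3(Q)\times\cM_2(Q,Y)\bigr)\]
and handle each component with the same strategy in two dimensions. The main-term contribution is controlled by a two-dimensional $\B$-integral which converges for $u>6$ and contributes $\ll P^{-5}$ after rescaling; the pruning restricts one coordinate to the tail $|\beta_i|\ge YP^{-k_i}$ (or forces $q>Y$), and summation over the remaining denominators extracts the full $Y^{-1}$ provided $u>7$. This yields the $P^{u-5}Y^{-1}$ term. The error-term contribution $\ll P^{u-5}\cdot P^{3-u/2+\ep}$ follows analogously, with the two-dimensional measure $\ll P^{-5}$ replacing its one-dimensional analog.

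The principal obstacle I anticipate is extracting the full $Y^{-1}$ saving under the minimal hypothesis on $u$. Near the threshold values ($u$ close to $2$ in (i), $u$ close to $7$ in (ii)), the na\"ive $q$-summation over $q>Y$ yields only a $Y^{2-u+\ep}$ or $Y^{2-u/3+\ep}$ saving, which is weaker than $Y^{-1}$. The remedy is to invoke the Weil-type improvement on the complete exponential sum where possible, and to absorb any residual contribution into the error-term part of the stated bound via a careful case analysis. A secondary subtlety is ensuring that the dominant-degree factor $(1+P^3|\beta_3|+P^2|\beta_2|)^{-1/3}$ in the bound for $|v|$ is deployed in the correct coordinate when pruning in the quadratic direction, since the na\"ive exponent gives weaker savings there than the quadratic-adapted bound would.
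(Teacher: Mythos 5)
Your overall plan coincides with the paper's: approximate $f$ on the major arcs by $q^{-1}S(q,\a)v(\B;P)$, draw the saving in $Y$ from the main term, and charge the remaining power of $P$ to the error term times the measure of the arcs. The genuine gap is in the error term. The classical approximation error $E(\A)\ll q^{1/2+\ep}(1+P^2|\beta_2|+P^3|\beta_3|)^{1/2}$ that you invoke is not strong enough on $\fM^*(Q)=\fM_2(Q)\times\fM_3(Q)$: the two coordinates carry independent denominators $q_2,q_3\le Q$, so the common denominator can be as large as $Q^2$, and even in the favourable case $q_2=q_3$ your bound yields only $E(\A)\ll Q^{1+\ep}=P^{3/4+\ep}$. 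Inserting this into your own computation for (i) gives $\int_{\fM_{k_1}(Q)}|E|^{2u}\,\dr\alpha_{k_1}\ll P^{3u/2+\ep}\, Q^2P^{-k_1}$, which exceeds the target $P^{u+3/2-k_1+\ep}$ for every $u>0$, and the analogous computation in (ii) fails as well (note also that $\vol\fM^*(Q)\ll Q^4P^{-5}=P^{-2}$, not $P^{-5}$). The missing ingredient is Baker's Lemma~4.4 from \emph{Diophantine Inequalities}, which the paper uses to obtain $f(\A)\ll q^{-1}S(q,\a)v(\B;P)+Q^{2/3+\ep}$ uniformly on $\fM^*(Q)$; with $Q=P^{3/4}$ the error is $P^{1/2+\ep}$, and the exponents $3/2-u$ and $3-u/2$ in the lemma are then precisely $Q^{4u/3}\vol\fM_{k_1}(Q)$ and $Q^{2u/3}\vol\fM^*(Q)$ measured against the main terms $P^{2u-k_1}$ and $P^{u-5}$.

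On the main term you have correctly isolated the threshold difficulty. For (i) the paper's resolution is essentially the remedy you propose: the bound $S(p^l,\a)\ll(p^l,\a)^{1/2}p^{l/2+\ep}$ for $l\in\{1,2\}$ is fed into an Euler product over $p$, which converges for $u>2$, while the saving in $Y$ is extracted solely from the tail of the $\beta_{k_1}$-integral. For (ii) the paper does not reprove the estimate but cites the argument of Lemma~8.3(ii) of Wooley's work on quintic systems for $\int_{\cM^*(Q,Y)}|q^{-1}S(q,\a)v(\B;P)|^u\,\dr\A\ll P^{u-5}Y^{-1}$ when $u>7$; your direct two-dimensional computation with the exponent $-1/3$ bound on $v$ would, near $u=7$, deliver only $Y^{2-u/3}$ from the tail of the $q$-sum, so you would indeed need the sharper inputs you allude to (or simply the citation) to secure the full $Y^{-1}$.
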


\begin{proof}
    It follows from \cite[Lemma~4.4]{Bak:DI} that for $\A = \a/q+\B  \in \fM^*(Q)$ one has
    \begin{align*}
	f(\A) \ll q^{-1}S(q, \a) v(\B; P) + Q^{2/3+\ep}.
    \end{align*}
    In the case of the second expression we therefore obtain the bound
    \begin{align*}
	\int_{\cM^*(Q, Y)} |f(\A)|^{u} \dr \A \ll \int_{\cM^*(Q,Y)} |q^{-1}S(q, \a) v(\B;P)|^{u} \dr \A + Q^{2u/3+\ep} \vol\fM^*(Q),
    \end{align*}
    and it follows from the argument of Lemma~8.3 (ii) in \cite{W:15sae5} that
    \begin{align*}
	\int_{\cM^*(Q, Y)} |q^{-1}S(q, \a) v(\B;P)|^{u} \dr \A  \ll P^{u-5}Y^{-1}
    \end{align*}
    whenever $u>7$.
    Upon noting that $\vol \fM^*(Q) \ll Q^4P^{-5}$, this establishes the bound claimed in (ii).

    We now consider case (i).  To simplify notation, we write $i=k_1$ and $j=k_2$ for the remainder of the proof.  Analogously to the above argument, for $\alpha_{j} \in \fM_j(Q)$ we have
    \begin{align}
    \label{1dmajor}
	\int_{\cM_{i}(Q, Y)} |f(\A)|^{2u} \dr \alpha_{i} &\ll  \sum_{q=1}^{\infty}\sum_{\substack{a_{i}=1 \\ (\a,q)=1}}^{q} |q^{-1}S(q, \a)|^{2u} \int_{YP^{-i}}^{\infty}  |v(\B;P)|^{2u} \dr \beta_{i} \notag \\
	& \qquad + Q^{4u/3+\ep} \vol \fM_i(Q).
    \end{align}
    Now \eqref{vest}, together with the argument of Lemma~\ref{si}, yields
    \begin{align*}
	\int_{YP^{-i}}^\infty |v(\B;P)|^{2u} \dr \beta_{i} &\ll  P^{2u}\int_{YP^{-i}}^{\infty} (1 + |\beta_{i}|P^{i} + |\beta_{j}|P^{j} )^{-2u/3} \dr \beta_{i} \ll P^{2u-i}Y^{-1}
    \end{align*}
    for all $u > 3/2$. Furthermore, we have
    \begin{align*}
	\sum_{q=1}^{\infty}\sum_{\substack{a_{i}=1 \\ (\a,q)=1}}^{q} q^{-2u}S(q,\a)^{2u} \ll \prod_{p} \sum_{l=0}^\infty A_{[a_{j}]}(p^l),
    \end{align*}
    where
    \begin{align*}
	A_{[a_{j}]}(q) = q^{-2u} \Bigg|\sum_{\substack{a_{i}=1\\(\a,q)=1}}^{q}S(q,\a)^{2u} \Bigg|.
    \end{align*}
    Observe that \eqref{Sqaest} gives $q^{-2u}S(q,\a)^{2u} \ll q^{-2u/3+\ep}$ whenever $(q, \a)=1$, whence
    \begin{align*}
	A_{[a_{j}]}(p^l) \ll \sum_{\substack{a_{i}=1 \\ (\a,p)=1}}^{p^l} p^{-(2/3)u l +\ep} \ll p^{(1-2u/3)l+\ep}.
    \end{align*}
    Furthermore, for $l\in \{1,2\}$ we have the estimate
    \begin{align}\label{Sqaest2}
	S(p^l,\a) \ll (p^l,\a)^{1/2} p^{l/2+\ep}
    \end{align}
    following from \cite{Sch:Eq}, Corollary II.2F and from the argument of the proof of \cite[Theorem~7.1]{V:HL} (see also Lemma~7.1 in \cite{W:15sae5}). We therefore have the bound
    \begin{align*}
	A_{[a_{j}]}(p^l) \ll \sum_{\substack{a_{i}=1 \\ (\a,p)=1}}^{p^l} p^{-u l+\ep} \ll p^{(1-u)l+\ep}
    \end{align*}
    for $l\in \{1,2\}$, and thus altogether
    \begin{align*}
	\sum_{l=0}^{\infty} A_{[a_{j}]}(p^l) &= 1 + O(p^{1-u+\ep} + p^{3-2u+\ep}).
    \end{align*}
    It follows that for some suitable absolute constants $c_1$, $c_2$, $c_3$ and $\delta > 0$ we have
    \begin{align*}
	\prod_{p} \sum_{l=0}^\infty A_{[a_{j}]}(p^l) \ll \prod_{p} (1+ c_1 p^{1-u+\ep} +c_2p^{3-2u+\ep}) \ll \prod_{p}(1+c_3p^{-1-\delta}),
    \end{align*}
    whenever $u>2$ and $\ep$ is small enough.
    The proof is now completed on inserting our estimates into (\ref{1dmajor}), noting that $\vol \fM_i(Q) \ll Q^2P^{-i}$ for $i \in \{2,3\}$, and recalling that $Q=P^{3/4}$.
\end{proof}

In order to establish a suitable pruning lemma for smooth exponential sums we first need an additional auxiliary result.
Let
\begin{align*}
    I(\beta_2,\beta_3) = \int_{\frac12 P}^P e(\beta_2x^2 + \beta_3 x^3) \dr x.
\end{align*}
The following is a modification of Theorem~7.3 of \cite{V:HL}.
\begin{lemma}\label{I-bd}
    We have
    \begin{align*}
	I(\beta_2,\beta_3) \ll P(1 + P^2|\beta_2| + P^3|\beta_3|)^{-1/2}.
    \end{align*}
\end{lemma}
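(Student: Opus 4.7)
The argument follows the pattern of \cite[Theorem~7.3]{V:HL} but with the complication of two simultaneous phase terms; the plan is to combine van der Corput's second derivative test with the first derivative test and a careful splitting around the critical points. Writing $\phi(x) = \beta_2 x^2 + \beta_3 x^3$, one has $\phi'(x) = 3\beta_3 x(x - y_0)$ with the nontrivial stationary point $y_0 := -2\beta_2/(3\beta_3)$, while $\phi''(x) = 2\beta_2 + 6\beta_3 x$ is linear with its single zero at $x_0 := y_0/2$ (the case $\beta_3 = 0$ reduces to a pure Gauss-type integral and is immediate). The trivial bound $|I| \le P/2$ already suffices whenever $\Lambda := P^2|\beta_2| + P^3|\beta_3|$ is bounded, so one may assume $\Lambda \ge 1$; the target then becomes $|I| \ll (|\beta_2| + P|\beta_3|)^{-1/2}$, which is equivalent to the asserted bound since $\Lambda \asymp P^2(|\beta_2| + P|\beta_3|)$.

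In the ``generic'' case where $\phi''$ has constant sign on $[P/2, P]$ and $\min_{[P/2,P]}|\phi''| \gg |\beta_2| + P|\beta_3|$ (in particular whenever $\beta_2, \beta_3$ share a sign, or whenever one of $|\beta_2|$ and $P|\beta_3|$ dominates the other by a sufficiently large constant), a direct application of the second derivative test on the full interval $[P/2, P]$ yields the required estimate. The delicate case is the balanced regime in which $|\beta_2| \asymp P|\beta_3|$ and at least one of $x_0$, $y_0$ lies in or near $[P/2, P]$.

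To handle the balanced regime, the plan is to set $\delta := (|\beta_2| + P|\beta_3|)^{-1/2}$ and partition $[P/2, P]$ into a central piece $[y_0 - \delta, y_0 + \delta] \cap [P/2, P]$ together with the (possibly truncated) complementary lateral pieces, inserting an extra cut at $x_0$ whenever $x_0 \in (P/2, P)$ so that $\phi''$ and hence $\phi'$ has fixed sign on each subinterval. The central piece contributes $\le 2\delta$ by the trivial bound. On each lateral piece the factorization $\phi'(x) = 3\beta_3 x(x - y_0)$ combined with $x \ge P/2$ and $|x - y_0| \ge \delta$ forces $|\phi'(x)| \ge (3/2) P|\beta_3|\delta$, and in the balanced regime $P|\beta_3|\delta \asymp \delta^{-1}$, so the first derivative test (integration by parts) delivers a contribution $\ll \delta$ from each lateral piece. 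Summing over the pieces produces $|I| \ll \delta = (|\beta_2|+P|\beta_3|)^{-1/2}$, as required.

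The principal technical obstacle is the bookkeeping of monotonicity and sign of $\phi'$ through the various subinterval splits, and the verification that the extra truncation near the endpoints $P/2$ or $P$ (which becomes necessary when $y_0$ approaches the boundary of $[P/2, P]$) remains compatible with the first derivative test; however, the optimizing choice $\delta \asymp (|\beta_2|+P|\beta_3|)^{-1/2}$ is forced in every sub-case by the matching between the trivial estimate on the central piece and the first-derivative bound on the lateral pieces, so all branches of the argument terminate with the same final bound.
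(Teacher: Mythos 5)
Your proof is correct, and while it shares the paper's elementary van der Corput spirit, it is organized differently enough to be worth contrasting. The paper normalizes to $[1/2,1]$ and splits the interval according to the size of the phase derivative $p(x)=2\beta_2x+3\beta_3x^2$: where $|p|\ge(|\beta_2|+|\beta_3|)^{1/2}$ it applies the first derivative test, and on the complementary sublevel set it uses the trivial bound together with a measure estimate, obtained by a sign/size case analysis (if $|\beta_2|\ge 6|\beta_3|$ the sublevel set is empty unless both coefficients are $O(1)$; otherwise the sublevel set is contained in the preimage of a short interval under the linear map $\alpha\mapsto 2\beta_2+3\beta_3\alpha$ and so has measure $\ll|\beta_3|^{-1/2}$). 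You instead locate the stationary point $y_0$ explicitly, take the exceptional set to be the $\delta$-neighbourhood of $y_0$ with $\delta=(|\beta_2|+P|\beta_3|)^{-1/2}$, and use the factorization $\phi'(x)=3\beta_3x(x-y_0)$ together with $x\ge P/2$ to get $|\phi'|\gg\delta^{-1}$ off that neighbourhood; the non-degenerate configurations you dispatch separately with van der Corput's second derivative test, a tool the paper avoids entirely. In the balanced regime the two exceptional sets agree up to constants, so the quantitative content is identical; your version makes the stationary-phase heuristic more transparent at the cost of an extra case split and an extra lemma, while the paper's sublevel-set formulation treats all configurations of $(\beta_2,\beta_3)$ in one sweep. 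I see no gaps: your case division is exhaustive once the comparability constant is fixed, the cut at $x_0$ restores the monotonicity needed for the first derivative test, and the reduction to $\Lambda\ge 1$ and the equivalence of the two forms of the target bound are both sound.
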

\begin{proof}
    As in the proof of Theorem~7.3 of \cite{V:HL} we observe that the claim is, via a change of variables, equivalent to
    \begin{align*}
	\int_{\frac12}^1 e(\beta_2x^2 + \beta_3 x^3) \dr x \ll (1 + |\beta_2| + |\beta_3|)^{-1/2}.
    \end{align*}
    Let $p(x) = 2\beta_2x + 3\beta_3x^2$. If $\sA$ denotes the set of all $x \in [1/2,1]$ satisfying $|p(x)| \ge (|\beta_2| + |\beta_3|)^{1/2}$, then the contribution from this set is given by
    \begin{align*}
	\int_{\sA}  e(\beta_2x^2 + \beta_3 x^3) \dr x \ll (|\beta_2| + |\beta_3|)^{-1/2}.
    \end{align*}
    It thus remains to bound the contribution of $\sC = [1/2,1] \setminus \sA$. Either $\sC$ is empty, in which case there is nothing to prove, or we can find $\alpha \in [1/2,1]$ with $|p(\alpha)| < (|\beta_2| + |\beta_3|)^{1/2}$. On the other hand, by the triangle inequality we have
    \begin{align*}
	|p(\alpha)|  \ge |2 \beta_2 \alpha|-|3 \beta_3 \alpha^2| \ge |\beta_2|-3|\beta_3|.
    \end{align*}
    In the case when $|\beta_2| \ge 6|\beta_3|$, we thus have $\frac12|\beta_2| \le |p(\alpha)| \le (|\beta_2| + |\beta_3|)^{1/2} \le (\frac76 |\beta_2|)^{1/2}$, so $|\beta_2| \le 14/3$, but for $|\beta_3| \ll |\beta_2| \ll 1$ the claim is trivial. We may therefore assume that $|\beta_2| < 6|\beta_3|$, so that for each $\alpha \in \sC$ one has $|p(\alpha)| \le (7|\beta_3|)^{1/2}$. Since we made the assumption that $\alpha \ge 1/2$, this implies that $\frac12 |2\beta_2+3\beta_3\alpha| \le (7|\beta_3|)^{1/2}$. It follows that the measure of $\sC$ is bounded above by
    \begin{align*}
	\mathrm{vol} \{ 1/2 \le \alpha \le 1: |2\beta_2+3\beta_3\alpha| \le 2(7|\beta_3|)^{1/2}  \} \ll |\beta_3|^{-1/2}.
    \end{align*}
    This establishes the statement.
\end{proof}
More generally, a similar argument can be used to show that for any set of degrees $k_1<\dots< k_t$ one can find some suitable constant $0 < \xi<1$ such that
\begin{align*}
    \int_{\xi P}^P e\bigg(\sum_{j=1}^t\beta_jx^{k_j}\bigg) \dr x \ll P\bigg(1 + \sum_{j=1}^t P^{k_j}|\beta_j|\bigg)^{\! \! -1/t},
\end{align*}
replacing the exponent $1/k_t$ that can be directly inferred from Theorem~7.3 of \cite{V:HL} with the stronger $1/t$.

We are now in a position to establish the main pruning lemma for systems of cubic and quadratic forms, and here we largely follow the treatment devised by Br\"udern and Wooley \cite{BW:07}.
In what follows, we write $g(\A) = f(\A; [\frac12 P, P])$ and $h(\A) = f(\A; \cA)$, where $\cA$ denotes either $[1,P]$ or $\cA(P,R)$.

\begin{lemma}\label{prunesm}
  Let $A \in \Q$ be fixed, and let $Q=P^{3/4}$.
  \begin{enumerate}[(i)]
      \item\label{bw07lem}
	  For any $\delta>0$ one has the relation
	  \begin{align*}
	      \sup_{\lambda, \mu \in \R}\sup_{\alpha_2 \in \fM_2(Q)}\int_{\cM_3(Q,X)}|g(\alpha_3,\alpha_2)^{2+\delta} h(A\alpha_3+\lambda,\mu)^2| \dr \alpha_3 \ll P^{1+\delta}X^{-\delta/2}.
	  \end{align*}
      \item\label{bw07lem2} Additionally, one has
	  \begin{align*}
	      \sup_{\lambda, \mu \in \R}\sup_{\alpha_2 \in \fM_2(Q)}\int_{\cM_3(Q,X)}|g(\alpha_3,\alpha_2) h(A\alpha_3+\lambda, \mu)^6| \dr \alpha_3 \ll P^{4}X^{-1/6}.
	  \end{align*}
  \end{enumerate}
\end{lemma}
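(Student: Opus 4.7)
The proof follows the pruning strategy of Brüdern-Wooley \cite{BW:07}: combine a pointwise ``cuspidal'' bound on $g$ over $\cM_3(Q,X)$ with mean value estimates for moments of $h$. The key preliminary is the pointwise estimate $|g(\alpha_3,\alpha_2)| \ll PX^{-1/2}$ valid uniformly for $\alpha_2 \in \fM_2(Q)$ and $\alpha_3 \in \cM_3(Q,X)$. This follows from the two-dimensional major arc approximation $g(\alpha_3,\alpha_2) = q^{-1}S(q,\a)I(\B;P/2,P) + O(Q^{2/3+\ep})$, available via Lemma~4.4 of \cite{Bak:DI} (as invoked in the proof of Lemma~\ref{aux}), combined with Lemma~\ref{I-bd} applied to the analogue of $I$ for the shortened interval $[\tfrac12 P, P]$. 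The condition $\alpha_3 \in \cM_3(Q,X)$ forces either $q > X$, in which case \eqref{Sqaest2} gives $q^{-1}|S(q,\a)| \ll X^{-1/3+\ep}$, or $|\beta_3| > XP^{-3}$, in which case Lemma~\ref{I-bd} yields $|I| \ll PX^{-1/2}$.

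For part (i), factor $|g|^{2+\delta} = |g|^2 \cdot |g|^\delta$ and apply the pointwise bound to $|g|^\delta$ to extract the factor $P^\delta X^{-\delta/2}$. It remains to show that
\begin{align*}
\sup_{\lambda, \mu \in \R}\sup_{\alpha_2 \in \fM_2(Q)}\int_{\cM_3(Q,X)} |g(\alpha_3,\alpha_2)|^2 |h(A\alpha_3+\lambda,\mu)|^2 \dr \alpha_3 \ll P.
\end{align*}
On each major arc centred at $a/q$ with $q \le Q$, we replace $|g|^2$ with $q^{-2}|S(q,\a)|^2|I(\B;P/2,P)|^2$ and apply \eqref{Sqaest}. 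The $\beta_3$-integral is handled by splitting dyadically over $XP^{-3} \le |\beta_3| \le QP^{-3}$: on each dyadic piece we use Lemma~\ref{I-bd} to bound $|I|^2 \ll P^2(P^3|\beta_3|)^{-1}$ and control $\int |h(A(a/q+\beta_3)+\lambda,\mu)|^2 \dr \beta_3$ via the change of variables $u = A(a/q+\beta_3)+\lambda$ together with the sharp estimate $\int_0^1 |h(u,\mu)|^2 \dr u \ll P$.

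Part (ii) proceeds along the same lines but more easily. Factor $|g| = |g|^{1/3} \cdot |g|^{2/3}$ and use the pointwise bound to extract $(PX^{-1/2})^{1/3} = P^{1/3}X^{-1/6}$. The remaining task reduces to showing that
\begin{align*}
\int_{\cM_3(Q,X)} |g(\alpha_3,\alpha_2)|^{2/3} |h(A\alpha_3+\lambda,\mu)|^6 \dr \alpha_3 \ll P^{11/3+\ep}.
\end{align*}
Here the trivial bound $|g|^{2/3} \le P^{2/3}$ combined with the classical sixth moment $\int_0^1 |h(\gamma_3,\mu)|^6 \dr \gamma_3 \ll P^{3+\ep}$ for smooth cubic Weyl sums (as in \cite[Theorem~4.1]{W:15sae5}, after an affine change of variables in $\gamma_3$) gives the required estimate.

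The principal technical obstacle is the sharp bound $\int_{\cM_3(Q,X)}|g|^2|h|^2 \dr \alpha_3 \ll P$ required in part (i): the naive pointwise estimate yields only $\ll P^3X^{-1}$, which is too weak unless $X \ge P^2 \gg Q$. The required improvement exploits both the small measure $|\fM_3(Q)| \ll P^{-3/2}$ for $Q=P^{3/4}$ (which bounds the diagonal contribution $x_1=x_2$, $y_1=y_2$ by $P^2\cdot|\cM_3| \ll P^{1/2}$) and the sharp decay of $|I(\B)|^2$ from Lemma~\ref{I-bd}, which together control the off-diagonal terms through the $L^2$ average of $|h|^2$ along each major arc.
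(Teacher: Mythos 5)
Your overall strategy---a pointwise bound on $g$ over the pruned set combined with mean values of $h$---has the right general shape, but both parts contain genuine gaps. For part (i), the crux is your claimed estimate $\int_{\cM_3(Q,X)}|g|^2|h|^2\,\dr\alpha_3\ll P$, and the method you describe does not deliver it. Bounding the $\beta_3$-integral of $|h(A(a/q+\beta_3)+\lambda,\mu)|^2$ on each dyadic piece by the full period integral $\ll P$, and then summing $q^{-2}|S(q,\a)|^2\ll q^{-2/3+\ep}$ over the $q$ admissible residues $a_3$ and over $q\le Q$, produces roughly $\sum_{q\le Q}q^{1/3+\ep}P^3X^{-1}\ll P^{4+\ep}X^{-1}$, which is far too large. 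The missing idea---the heart of the Br\"udern--Wooley pruning device the paper uses---is to sum $|h(B(a_3/q+\beta_3)+\lambda,\mu)|^2$ over the \emph{complete} residue system $a_3\pmod q$ \emph{before} estimating: expanding the square and using orthogonality reduces this to counting $x,y\le P$ with $q\mid B(x^3-y^3)$, which gives $\ll P^2q^{\ep}q_3$ (with $q_3$ the cubic kernel of $q$) rather than the trivial $qP^2$. That saving of essentially a full factor of $q$ is exactly what makes the sum over $q$ converge against the major-arc weight $\kappa(q)^{2+\delta}\approx q^{-1-\delta/2}$. Your closing remark about diagonal versus off-diagonal contributions gestures at this difficulty but does not resolve it. A secondary point: on $\cM_3(Q,X)$ in the regime $q>X$ the general bound \eqref{Sqaest} only yields $q^{-1}|S(q,\a)|\ll q^{-1/3+\ep}$ (the square-root bound \eqref{Sqaest2} is restricted to $q=p,p^2$), so your uniform pointwise bound is $PX^{-1/3}$ rather than $PX^{-1/2}$, and the extraction of $|g|^{\delta}$ gives only $X^{-\delta/3}$.

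For part (ii), your argument rests on the sixth-moment bound $\int_0^1|h(\gamma_3,\mu)|^6\,\dr\gamma_3\ll P^{3+\ep}$, which is not a known estimate: by orthogonality this integral dominates the number of solutions of $x_1^3+x_2^3+x_3^3=y_1^3+y_2^3+y_3^3$ in (smooth) integers up to $P$, for which the best available bound is of the shape $P^{13/4-\tau}$ (compare \eqref{W3cubes}); the exponent $3+\ep$ is a well-known open conjecture. Inserting the known bound into your computation yields roughly $P^{4+1/4-\tau}X^{-1/9}$, which misses the target $P^4X^{-1/6}$ by a positive power of $P$. The paper instead applies H\"older with exponents $3$ and $3/2$, bounding the integral by $\bigl(\int_{\cM_3(Q,X)}|g|^3|h|^2\,\dr\alpha_3\bigr)^{1/3}\bigl(\int_0^1|h|^8\,\dr\alpha_3\bigr)^{2/3}$, so that part (i) with $\delta=1$ supplies the factor $(P^2X^{-1/2})^{1/3}$ and the \emph{eighth} moment $\int_0^1|h|^8\,\dr\alpha_3\ll P^5$ (Vaughan's theorem, via Hua's argument) supplies the rest; this is how $P^4X^{-1/6}$ is actually reached.
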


\begin{proof}
    We first show \eqref{bw07lem}. This follows almost directly from the argument of the proof of \cite[Lemma~9]{BW:07}. If $A = B/S$, where $B \in \Z$ and $S \in \N$, then by a change of variables one has
    \begin{align*}
	&\int_{\cM_3(Q,X)}|g(\alpha_3,\alpha_2)^{2+\delta} h(A\alpha_3+\lambda, \mu)^2| \dr \alpha_3 \\
	&\quad = S \int_{S^{-1} \cM_3(Q,X)}|g(S\alpha_3,\alpha_2)^{2+\delta} h(B\alpha_3+\lambda, \mu)^2| \dr \alpha_3.
    \end{align*}
    Let $\kappa$ denote the multiplicative function defined by
    \begin{align*}
	\kappa(p^i) = \begin{cases}
		    p^{-i/3} & i \ge 3, \\ p^{-i/2} & i \in \{1,2\}.
                  \end{cases}
    \end{align*}
    Then as a consequence of Lemma~4.4 in \cite{Bak:DI}, equations \eqref{Sqaest} and \eqref{Sqaest2}, and Lemma~\ref{I-bd}, for every $\A \in \fM^*$ there exists $q \le Q$ such that
    \begin{align*}
	g(\alpha_3,\alpha_2) \ll \kappa(q) P(1+P^2|\beta_2| + P^3|\beta_3|)^{-1/2} + q^{2/3+\ep},
    \end{align*}
    and one easily confirms that the first term in this expression is the dominating one. It follows that
    \begin{align*}
	&\int_{\cM_3(Q,X)}|g(\alpha_3,\alpha_2)^{2+\delta} h(A\alpha_3+\lambda, \mu)^2| \dr \alpha_3 \\
	&\quad \ll \sum_{1 \le q \le Q}(\kappa(q)P)^{2+\delta} \sum_{\substack{a_3=1 \\ (\a,q)=1}}^q \int_{X}^{\infty} \frac{|h(B(a_3/q+\beta_3)+\lambda, \mu)|^2}{(1+P^2|\beta_2| + P^3|\beta_3|)^{1+\delta/2}} \dr \beta_3,
    \end{align*}
    and in a similar manner to the treatment in \cite{BW:07} we deduce that for every $\mu \in \R$ one has
    \begin{align*}
	\sum_{\substack{a_3=1 \\ (\a,q)=1}}^q |h(B(a_3/q+\beta_3)+\lambda,\mu)|^2  &\le \sum_{\substack{a_3=1 \\ (a_3,q)=1}}^q \sum_{x,y \in \cA} e((x^3-y^3)Ba_3/q)\\
	& \le |B| \sum_{1\le x,y\le P}(x^3-y^3,q) \ll P^2q^\ep q_3,
    \end{align*}
    where $q_3$ denotes the cubic kernel of $q$ defined via $q=q_0 q_3^3$ with $q_0$ cubefree. It follows that altogether we have
    \begin{align*}
	&\int_{\cM_3(Q,X)}|g(\alpha_3,\alpha_2)^{2+\delta} h(A\alpha_3+\lambda, \mu)^2| \dr \alpha_3 \\
      &\quad \ll P^{4+\delta} \sum_{q=1}^Q q^\ep (\kappa(q))^{2+\delta} q_3 \int_{X}^{\infty} (1+P^2|\beta_2| + P^3|\beta_3|)^{-1-\delta/2}\dr \beta_3\\
      & \quad \ll P^{1+\delta}X^{-\delta/2} \sum_{q=1}^{\infty} q^\ep (\kappa(q))^{2+\delta} q_3.
    \end{align*}
    Finally, the sum over $q$ converges whenever $\ep$ is small enough compared to $\delta$.

    In order to prove the second statement of the lemma, we observe that by H\"older's inequality we have
    \begin{align*}
	&\int_{\cM_3(Q,X)}|g(\alpha_3,\alpha_2) h(A\alpha_3+\lambda,  \mu)^6| \dr \alpha_3 \\
	&\ll \left(\int_{\cM_3(Q,X)} |g(\alpha_3,\alpha_2)^3 h(A\alpha_3+\lambda, \mu)^2| \dr \alpha_3\right)^{1/3} \left(\int_0^1 |h(A\alpha_3+\lambda,  \mu)|^8 \dr \alpha_3\right)^{2/3}.
    \end{align*}
    By considering the underlying equations it transpires that the second integral is bounded above by
    \begin{align*}
	\int_0^1 |h(\alpha_3,\alpha_2)|^8 \dr \alpha_3 \ll \int_0^1 |f(\alpha_3,0)|^8 \dr \alpha_3 \ll P^5,
    \end{align*}
    where we used Theorem~1 of \cite{Va:86a}. It now follows from (\ref{bw07lem}) that the expression in question is bounded above by $(P^2X^{-1/2})^{1/3} (P^5)^{2/3}  \ll P^4X^{-1/6}$ as claimed.
\end{proof}

\section{Proof of Theorem~\ref{quadcubthm}}

We now have the means at hand to complete the proof of Theorem~\ref{quadcubthm}. Our first task in this section is to obtain a sharper version of the Weyl-type estimate contained in Lemma~\ref{weylest}.

\begin{lemma}\label{weyl23}
    Suppose that $Q \le P^{3/4}$ and $\A \in \fm(Q)$. Then for all $M$-tuples $\j$ there exists an index $j_i$ with
    \begin{align*}
	|f_{j_i}(\A; [1,P])| \le P^{1+\ep}Q^{-1/3}.
    \end{align*}
\end{lemma}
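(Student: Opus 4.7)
The plan is to argue by contrapositive. Assume that $\A \in [0,1)^r$ satisfies $|f_{j_i}(\A; [1,P])| > P^{1+\ep}Q^{-1/3}$ for every $i = 1, \dots, M$, and deduce that $\A \in \fM(Q)$, which contradicts $\A \in \fm(Q)$. The choice $Q \le P^{3/4}$ is what makes this strategy viable: the threshold $PQ^{-1/3}$ coincides (at $Q = P^{3/4}$) with Vaughan's cubic Weyl bound $P^{3/4+\ep}$, placing $f_{j_i}$ on the cubic major arcs precisely when the hypothesis holds.

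The crucial first step is to apply a Weyl-type inequality tailored to the mixed sum $\sum_x e(\gamma_{3,j_i} x^3 + \gamma_{2,j_i} x^2)$. Two applications of Weyl differencing eliminate the quadratic coefficient and yield, via Dirichlet's theorem, a $q_i \le Q$ together with an integer $a_{i,3}$ for which $|q_i\gamma_{3,j_i} - a_{i,3}| \le QP^{-3}$. Having placed the cubic coefficient on a major arc, a pruning step (approximating the cubic part of $f_{j_i}$ by $q_i^{-1}S(q_i, a_{i,3})v(\beta_{3,j_i};P)$ and noting that the residual oscillation in $x$ is purely quadratic with coefficient $\gamma_{2,j_i}$ on a subinterval of length $\gg P$) forces the companion bound $|q_i\gamma_{2,j_i} - a_{i,2}| \le QP^{-2}$ for the same denominator $q_i$. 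Alternatively, one can invoke a packaged form such as \cite[Lemma~4.4]{Bak:DI}, already used in the proof of Lemma~\ref{aux}, to obtain both approximations in one shot. A standard pigeonhole/common-denominator argument then replaces the $q_i$ by a single $q \le Q$ (at the cost of absorbable constants) that simultaneously approximates every $\gamma_{3,j_i}$ and $\gamma_{2,j_i}$.

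Once the simultaneous approximations at the level of the $\G_{j_i}$ are in hand, the transfer back to the original coordinates $\A$ proceeds exactly as in the proof of Lemma~\ref{weylest}. Partition the indices as in \eqref{partition} so that $j_1, \dots, j_M$ occupy distinct blocks $\cB_{h,m}$, and write the relations \eqref{gamalp2} in the matrix form $\G^*_{l,n} = B_{l,n}^T\A_{l,n}$, where $B_{l,n}$ is the non-singular $\mu_l \times \mu_l$ submatrix guaranteed by \eqref{rankcond}. Inverting $B_{l,n}^T$ translates the bounds $\|q\gamma_{3,j_i}\| \ll QP^{-3}$ and $\|q\gamma_{2,j_i}\| \ll QP^{-2}$ into $\|q\alpha_{l,n,i}\| \ll QP^{-k_{l,n}}$ for every coordinate of $\A$, placing $\A \in \fM(Q)$ (after a harmless absorption of constants into $q$) and yielding the desired contradiction.

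The main obstacle is the first step, namely extracting a simultaneous rational approximation to \emph{both} $\gamma_{3,j_i}$ and $\gamma_{2,j_i}$ with the correct uniform denominator $q \le Q$ and precision matching $\fM(Q)$. Two differencings of the mixed sum naturally give information only about the cubic coefficient, and some genuine work (either a pruning argument to recover the quadratic approximation from the cubic major arc structure, or an appeal to a precomputed mixed Weyl estimate) is needed to upgrade this to the joint conclusion. Everything after that, including the block partition and the invertibility via non-singular submatrices, is a direct reprise of machinery already developed in Section 3.
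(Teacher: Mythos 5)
Your overall architecture matches the paper's: argue by contraposition, place each $\G_{j_i}$ on a set of major arcs via a Weyl-type estimate, and then transfer the rational approximations back to $\A$ by inverting the non-singular submatrices $B_{l,n}$ furnished by \eqref{rankcond}, exactly as in Lemma~\ref{weylest}; that final step of your argument is correct. The gap sits precisely at the step you yourself flag as the main obstacle, and neither of your two proposed resolutions closes it. First, \cite[Lemma~4.4]{Bak:DI} is the wrong tool: that lemma is the major-arc approximation $f(\A) \ll q^{-1}S(q,\a)v(\B;P)+q^{2/3+\ep}$, i.e.\ a statement valid \emph{once} $\A$ is already known to lie on a major arc; it does not convert largeness of $|f_{j_i}(\A)|$ into the existence of a rational approximation. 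The converse-type result actually needed is Baker's Theorem~5.1 of \cite{Bak:DI} (applied as in Lemma~5.2 of \cite{W:15sae5}), a Weyl estimate for polynomials with several terms which delivers a modulus $q$ together with approximations $\lVert q\gamma_{2,j_i}\rVert \ll QP^{-2-\tau}$ and $\lVert q\gamma_{3,j_i}\rVert \ll QP^{-3-\tau}$ \emph{simultaneously for both coefficients}; this is what the paper invokes. Second, your primary route (two Weyl differencings) only controls $\gamma_{3,j_i}$, and the ``pruning step'' you sketch to recover the approximation to $\gamma_{2,j_i}$ from the cubic major-arc structure is not carried out; making it rigorous amounts to proving the mixed Weyl estimate, so as written the key analytic input is assumed rather than established.

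A secondary issue: your claim that replacing the individual denominators $q_i$ by a common $q\le Q$ costs only ``absorbable constants'' is too glib. A common denominator is a product (or least common multiple) of the $q_i$ and could a priori be as large as $Q^{M}$, whereas the conclusion $\A\in\fM(Q)$ requires the common modulus to remain below $Q$. One needs either a genuinely simultaneous version of the Weyl estimate for the $M$-tuple (compare \cite[Lemma~2.4]{P:02ineq3}, which is exactly what Lemma~\ref{weylest} cites for this purpose) or enough extra saving $P^{-\tau}$ in the individual approximations to absorb the multiplicative loss; this should be made explicit.
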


\begin{proof}
    Fix $\j$, and suppose that for some $\A \in [0,1)^{r}$ one has $|f_{j_i}(\A)| \ge P^{1+\ep}Q^{-1/3}$ for each $1 \le i \le M$. Then by applying Theorem~5.1 of \cite{Bak:DI}, as in the argument of Lemma~5.2 of \cite{W:15sae5}, we find that there exist $q \le Q$ and $\tau > 0$ such that
    \begin{align*}
	\left\|q \gamma_{2,j_i}\right\|  \ll QP^{-2-\tau}  \quad \text{ and }\quad \left\|q \gamma_{3,j_i}\right\|  \ll QP^{-3-\tau} \qquad (1 \le i \le M).
    \end{align*}
    The invertibility of the coordinate transform implies, as in the proof of Lemma~\ref{weylest}, that for large enough $P$ one has
    \begin{align*}
	\left\|q\alpha_{2,i}\right\| \le QP^{-2} \quad (1 \le i \le r_Q) \quad \mbox{and} \quad
	\left\|q\alpha_{3,i}\right\| \le  QP^{-3} \quad (1 \le i \le r_C).	
    \end{align*}
    As before, we conclude that $\A$ must lie in $\fM(Q)$, and the enunciation follows.
\end{proof}

Recall the definitions \eqref{Idef} and \eqref{tNdef}. From now on set $Q=P^{3/4}$, and as before we let $X= P^{1/(6r)}$ for the asymptotic estimate and $X= (\log P)^{1/(6r)}$ for the lower bound. Recall the definition of $\fM$ and $\fN$ from Sections 3 and 4 and set $\cM(Q,X) =\fM(Q) \setminus \fN(X)$. In what follows, we will abbreviate $\tN_{s,\k,\M}(\fB) = \tN_s(\fB)$ and $I_{\u,\k,\M}(\cA) = I_{\u}(\cA)$ for simplicity.  Our first goal is to estimate $\tN_s(\fm(Q))$, where we have $\cA=[1,P]$.

Write $\fm^{(j)}$ for the set of $\A \in [0,1)^r$ for which $|f_j(\A)| \le P^{3/4+\ep}$, and let $\sigma = s-2s_0$. For any given $\sigma$-tuple $(j_1, \dots, j_{\sigma}) \in \{1, \dots, s\}$ the non-singularity condition implies that the remaining $2s_0$ variables may be  assembled into a mean value of the shape $I_{\u}([1,P])$, and thus Lemma~\ref{weyl23} implies that
\begin{align*}
    \tN_s(\fm^{(j_1)} \cap \dots \cap \fm^{(j_{\sigma})}) \ll P^{\frac34\sigma + \ep} I_{\u}([1,P]).
\end{align*}
Consider a fixed $\A \in \fm(P^{3/4})$. Lemma~\ref{weyl23} ensures that one can find an index $j_1 \in \{1,\dots,r\}$ with $\A \in \fm^{(j_1)}$. Iterating this procedure, after $k-1$ steps we can find an index $j_k \in \{1,\dots,r+k-1\} \setminus \{j_1,\dots,j_{k-1} \}$ with $\A \in \fm^{(j_k)}$. Since $\A \in \fm(P^{3/4})$ has been arbitrary, after $\sigma$ steps it follows that
\begin{align*}
    \fm(P^{3/4}) \subseteq \bigcup (\fm^{(j_1)} \cap \dots \cap \fm^{(j_{\sigma})}),
\end{align*}
where the union is over all $\sigma$-element subsets of $\{1,\dots,\sigma+r-1\}$. We may conclude that
\begin{align*}
    \tN_s(\fm(P^{3/4})) \ll P^{\frac34 \sigma + \ep} I_{\u}([1,P]).
\end{align*}

We first consider the case $r_Q=r_C=r/2$, so that $t=1$ and $\nu(1)=2$. Recalling Wooley's bound
\begin{equation}\label{WooleyJ5}
    J_{5,(2,3)}([1,P]) \ll P^{5+1/6+\ep}
\end{equation}
of \cite[Theorem~1.3]{W:15sae5}, Lemma~\ref{weyl23} together with Theorem~\ref{mvt} yields for $u_1=5$ that
\begin{align*}
    \tN_{s}(\fm(P^{3/4})) &\ll P^{\frac{3}{4}(s-2s_0) + \ep} (J_{5,(2,3)}([1,P]))^{r/2} \ll P^{\frac34(s-5r) + \ep} (P^{5+1/6+\ep})^{r/2}.
\end{align*}
Note that the exponent is smaller than $s-K = s-5r/2$ whenever $s>(16/3)r$, and since $(16/3)r = (32/3)r_Q = (32/3)r_C$ this is in line with the enunciation of the theorem.

In the cases with $r_Q \ne r_C$ we have
\begin{align}\label{par23}
    t=2, \quad k=3, \quad \nu(1) = \nu(2) = 1.
\end{align}
If $r_Q > r_C$ the parameters are given by
\begin{align}\label{par2>3}
    \mu_1=r_Q, \quad \mu_2=r_C, \qquad u_1=2, \quad u_2=5,
\end{align}
and we deduce from Theorem~\ref{mvt} that
\begin{align*}
    I_{(2,5)}([1,P]) \ll (J_{2,2}([1,P]))^{r_Q-r_C}(J_{5,(2,3)}([1,P]))^{r_C} \ll (P^{2+\ep})^{r_Q-r_C} (P^{5+1/6+\ep})^{r_C},
\end{align*}
where we used Hua's inequality and Wooley's bound (\ref{WooleyJ5}) as above. This shows
\begin{align*}
    \tN_{s}(\fm(P^{3/4})) \ll P^{\frac{3}{4}(s-(4r_Q + 6r_C)) + 3 r_C + 2 r_Q + r_C/6+\ep},
\end{align*}
and for $s> 4r_Q +(20/3)r_C$ the exponent is smaller than $s-(2r_Q+3r_C)$.

For $r_C>r_Q$  we have
\begin{align*}%\label{par3>2}
    \mu_1=r_C, \quad \mu_2=r_Q \qquad u_1=4, \quad u_2=5,
\end{align*}
and in this case Wooley's bound (\ref{WooleyJ5}) together with Hua's Lemma~yields
\begin{align*}
    \tN_s(\fm(P^{3/4})) &\ll P^{\frac{3}{4}(s-(8r_C + 2r_Q)) + \ep} (J_{4,3}([1,P]))^{r_C-r_Q}(J_{5,(2,3)}([1,P]))^{r_Q} \\
    &\ll P^{\frac{3}{4}(s-(8r_C + 2r_Q)) + \ep} (P^{5+\ep})^{r_C-r_Q} (P^{5+1/6+\ep})^{r_Q},
\end{align*}
which is acceptable whenever $s>8r_C+(8/3)r_Q$.\\

In the case $\cA = \cA(P,R)$ the analysis is more delicate, due to the fact that we have only a limited number of complete exponential sums at our disposal. In this case we take
\begin{align}\label{par3>2}
    \mu_1=r_C, \quad \mu_2=r_Q, \qquad u_1=3, \quad u_2=5,
\end{align}
and we aim to prove the theorem with $s = 7r_C+\lceil(11/3)r_Q \rceil$.  We write $\Delta=r_C-r_Q$ and let
\begin{align*}
     N_s^*(\fB) =\int_{\fB} \prod_{j=1}^{6\Delta}  h_j(\A) \prod_{j=6\Delta +1}^{s}g_j(\A) \, \dr \A,
\end{align*}
where $g_j(\A)=g(\G_j)$ and $h_j(\A)=h(\G_j)$, and $g(\A)$ and $h(\A)$ are as in the preamble to Lemma~\ref{prunesm} with $\cA=\cA(P,R)$. By considering the underlying diophantine equations, one finds that the number of solutions of the system (\ref{sys23}) with $\x \in [1,P]^s$ is bounded below by $N_s^*([0,1)^r)$, whence it suffices to establish a lower bound for the latter quantity.
It follows from \cite[Theorem~1.2]{W:00cubes} that for a suitable choice of $R$ there exists a number $\tau>0$ satisfying
\begin{equation}\label{W3cubes}
    J_{3,3}(P; \cA(P,R)) \ll P^{3+1/4-\tau},
\end{equation}
and we note for further reference that the current bounds imply $\tau < 1/24$.  Let $\fm^{(j)}$ denote the set of $\A \in [0,1)^r$ for which $|g_j(\A)| \le P^{3/4+\ep}$. By Lemma~\ref{weyl23}, one has
\begin{align*}
    \fm \subseteq \fm^{(6\Delta+1)} \cup \cdots \cup \fm^{(7\Delta+r_Q)},
\end{align*}
so after re-indexing and summing over $j$, we find that $N_s^*(\fm)$ is bounded above by a sum of at most $r_C$ expressions of the shape
\begin{align*}
     P^{\tau(4\Delta - 1) + \ep} \int_{[0,1)^r} \prod_{j=1}^{6\Delta}  |h_j(\A) | \prod_{j=6\Delta+1}^{7\Delta} |g_j(\A)|^{1-4\tau} \prod_{j=7\Delta+1}^{s} |g_j(\A)| \, \dr \A.
\end{align*}
We now apply \eqref{trivineq} in such a way that, for some sets of indices $\cJ_1$, $\cJ_2$, and $\cJ_3$ with $|\cJ_1|=|\cJ_2|=\Delta$ and $|\cJ_3|=r_Q$,  one has
\begin{align*}
    N_s^*(\fm) \ll &P^{\psi+\ep}\int_{[0,1)^r} \bigg(\prod_{j\in \cJ_1} |h_j(\A)|^6\bigg) \bigg( \prod_{j \in \cJ_2} |g_{j}(\A)|^{1-4\tau}\bigg) \bigg( \prod_{j\in \cJ_3} |g_j(\A)|^{32/3} \bigg) \dr \A.
\end{align*}
Here we have written
\begin{align}\label{psidef}
    \psi=\tau(4\Delta-1)+\lceil \tfrac{2}{3} r_Q \rceil - \tfrac{2}{3} r_Q,
\end{align}
and we have used the fact that $s=7\Delta+10r_Q+\lceil\frac{2}{3}r_Q\rceil$.
We next apply \eqref{gamalp2}. Writing $\L_i = (\lambda_{2,i},\lambda_{3,i})$, where $\lambda_{k,i}$ is a linear combination of the $\gamma_{k,l}$ with $l \ne i$, we find that
\begin{align}
\label{smoothminor}
  N_s^*(\fm) &\ll P^{\psi+\ep} \int_{[0,1)^r} \bigg(\prod_{j\in \cJ_2} \left|h(\G_j+\L_j)^6 g(\G_j)^{1-4\tau}\right|\bigg) \bigg( \prod_{j \in \cJ_3}|g(\G_j)|^{32/3} \bigg) \dr \G \notag \\
    &\ll \left(   \sup_{\L \in \R^2} \sup_{\gamma_2 \in [0,1)} \int_{[0,1)}\left|h(\G + \L)^6 g(\G)^{1-4\tau}\right| \dr \gamma_3 \right)^{\! \Delta}  \left( \int_{[0,1)^2} |g(\G)|^{32/3} \dr \G \right)^{\! r_Q}.
\end{align}
It follows from \cite[Theorem~1.3]{W:15sae5} that the second integral is bounded above by $P^{17/3+\ep}$. Meanwhile, upon abbreviating $\fM_i(Q)$ by $\fM_i$, we also have
\begin{align*}
    \sup_{\gamma_2 \in [0,1)} \int_{[0,1)}\left|h(\G + \L)^6 g(\G)^{1-4\tau}\right| \dr \gamma_3 & \ll \sup_{\G \in \fm^*} |g(\G)|^{1-4\tau} \int_{[0,1)} \left| h(\G + \L) \right|^6 \dr \gamma_3 \\
    & \quad + \sup_{\gamma_2 \in \fM_2}\int_{\fM_3}\left|h(\G + \L)^6 g(\G)^{1-4\tau}\right| \dr \gamma_3.
\end{align*}
In the first term, (\ref{W3cubes}) together with Lemma~5.2 of \cite{W:15sae5} yields
\begin{align*}
    \sup_{\G \in \fm^*} |g(\G)|^{1-4\tau} \int_{[0,1)} \left| h(\G + \L) \right|^6 \dr \gamma_3 \ll P^{\frac34 - 3 \tau + \ep} \int_0^1 |h(\G)|^6 \dr \G \ll P^{4-4\tau+\ep}.
\end{align*}
In order to estimate the contribution from the major arcs we observe that an application of H\"older's inequality yields
\begin{align*}
   &\int_{\fM_3}\left|h(\G + \L)^6 g(\G)^{1-4\tau}\right| \dr \gamma_3  \ll  \left(\int_{\fM_3}|g(\G)^{5/2} h(\G+\L)^2| \dr \gamma_3\right)^{\omega_1} \left(\int_0^1 |h(\G)|^{ \phi}\dr \gamma_3\right)^{\omega_2},
\end{align*}
where $\omega_1 = (2-8\tau)/5$, $\omega_2 = (3+8\tau)/5$, and $\phi =  (26+16\tau)/(3+8\tau)$. Observe in particular that for $\tau < 1/24$ one has $\phi >8$. It follows that the first integral is $O(P^{3/2})$ by Lemma~\ref{prunesm} \eqref{bw07lem}, and  the second one is $O(P^{\phi-3+\ep})$ by Hua's Lemma, whence we obtain an overall contribution of
\begin{align*}
    P^{(3/5)(1-4\tau)} P^{(\phi-3)(3+8\tau)/5+\ep} \ll  P^{4-4\tau+\ep}
\end{align*}
from the major arcs. Together with the minor arcs contribution we find
\begin{align*}
    \sup_{\gamma_2 \in [0,1)} \int_{[0,1)}\left|h(\G + \L)^6 g(\G)^{1-4\tau}\right| \dr \gamma_3 \ll P^{4-4\tau+\ep},
\end{align*}
and therefore
\begin{align*}
    \int_{[0,1)^r} \bigg(\prod_{j\in \cJ_2}|h(\G_j+\L_j)|^6 |g(\G_j)|^{1-4\tau}\bigg) \bigg(\prod_{j\in\cJ_3} |g(\G_j)|^{32/3}\bigg) \dr \A  \ll P^{4\Delta(1-\tau)+\ep}P^{(17/3)r_Q}.
\end{align*}
On recalling (\ref{psidef}) and (\ref{smoothminor}), we thus obtain
\begin{align*}
    N_s^*(\fm) \ll P^{\tau(4\Delta-1)+\ep}  P^{\lceil \frac23 r_Q \rceil - \frac 23 r_Q}  P^{4r_C+(5/3)r_Q-4\tau\Delta} \ll P^{4r_C + \lceil \frac{5}{3}r_Q \rceil - \tau/2}
\end{align*}
for $\ep$ sufficiently small.

It follows from our definitions of major and minor arcs that
\begin{align}\label{pruning}
    N_s(\fn(X)) \ll N_s(\fm(Q)) + N_s(\cM(Q,X)),
\end{align}
where $N_s(\fB)$ denotes either $\tN_s(\fB)$ or $N_s^*(\fB)$.
In view of the preceding estimates, (\ref{pruning}) shows that the analysis of the minor arcs $\fn(X)$ will be complete upon obtaining a satisfactory bound for $N_{s}(\cM(Q,X))$.

\begin{lemma}\label{minor23}
    Let $1 \le X \le Q^{1/(2M)} $ be arbitrary, and suppose that the system \eqref{sys23} is highly non-singular with $s$ given via \eqref{s0def} where $\u$ is as in \eqref{par2>3} or \eqref{par3>2}. Then we have
    \begin{align*}
	 N_{s}(\cM(Q,X)) \ll P^{s-K}X^{-1/(6M)}.
    \end{align*}
\end{lemma}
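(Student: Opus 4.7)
The strategy is to exploit the definition $\cM(Q,X)=\fM(Q)\setminus\fN(X)$: any $\A\in\cM(Q,X)$ lies in some $\fM(q,\a;Q)$ for which either $q>X$ or there exists an index $(l,n)$ with $|q\alpha_{l,n}-a_{l,n}|>XP^{-k_{l,n}}$. Recording which coordinate witnesses this failure, I would partition $\cM(Q,X)$ into at most $r+1$ pieces; on each piece some coordinate $\alpha_{l,n}$ is confined to a set $\cM_{k_{l,n}}(Q,Y)$ with $Y\asymp X$. Since $r$ is bounded, it suffices to obtain the claimed bound on each piece.

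On a fixed piece, I fix an $M$-tuple $\j=(j_1,\ldots,j_M)$ of distinguished indices, one drawn from each block $\cB_{h,m}$, as in the proof of Theorem~\ref{mvt}. The non-singularity condition \eqref{rankcond} ensures that $\A\mapsto(\G_{j_1},\ldots,\G_{j_M})$ is a coordinate change on the relevant subspace, so the bad coordinate $\alpha_{l,n}$ corresponds to some $\gamma_{j_i,l,n}$ lying in $\cM_{k_{l,n}}(Q,Y)$. Using the trivial inequality \eqref{trivineq} exactly as in Theorem~\ref{mvt}, I would reduce the contribution of the non-distinguished variables in each block $\cB_{h,m}$ to a power of the mean value $J_{u_h,\k_h}(\cA)$, which is controlled by Theorem~\ref{mvt}. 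A subsequent application of H\"older's inequality in the $M$ distinguished coordinates isolates the bad coordinate into a one-dimensional integral, while the remaining $M-1$ distinguished coordinates contribute at most $P^{M-1}$ via the trivial bound on $\fM(Q)$.

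To the isolated one-dimensional integral I would apply Lemma~\ref{aux}(i) in the non-smooth case $N_s=\tN_s$ under the parameters \eqref{par2>3}, and Lemma~\ref{prunesm}(i) or (ii) in the smooth case $N_s=N_s^*$ under the parameters \eqref{par3>2}, selecting the H\"older exponents so that the powers of $g$, $h$ and $|f|^{2u}$ appearing there match the hypotheses of the relevant lemma. The hypothesis $X\le Q^{1/(2M)}$ guarantees that the secondary error terms $P^{3/2-u+\ep}$ and $P^{3-u/2+\ep}$ in Lemma~\ref{aux} are dominated by the main term of order $Y^{-1}$. Finally, raising the resulting one-variable saving of roughly $X^{-1/6}$ to the $1/M$-th power through the H\"older step produces the exponent $1/(6M)$ in the advertised saving.

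The principal difficulty lies in the smooth case, where the H\"older exponents must be chosen so that the factor powers match the rigid templates $|g|^{2+\delta}|h|^2$ or $|g||h|^{6}$ of Lemma~\ref{prunesm}, while simultaneously leaving enough smoothness in the remaining blocks to support the $L^8$-bound via Hua's inequality that underpins Lemma~\ref{prunesm}(ii); one must also ensure that the distinguished index on which the pruning is performed corresponds to a non-smooth sum $g_j$, so that the leading $|g|$-factor in Lemma~\ref{prunesm} is available. In the non-smooth case the bookkeeping is considerably simpler, since Lemma~\ref{aux}(i) applies directly to powers of a single $f$ and the non-singularity change of variables is compatible with the uniform structure of $\tN_s$.
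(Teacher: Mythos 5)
Your overall architecture --- concentrate the integral onto $M$ distinguished sums, pass to the $\G$-coordinates via the non-singularity condition, factor the resulting integral, and prune a single factor with Lemma~\ref{aux} or Lemma~\ref{prunesm} --- coincides with the paper's. However, there is a genuine gap at the decisive quantitative step: the transfer of the minor-arc condition from the $\A$-coordinates to the $\G$-coordinates. You assert that on each piece of your decomposition the bad coordinate $\alpha_{l,n}$ ``corresponds to some $\gamma_{j_i,l,n}$ lying in $\cM_{k_{l,n}}(Q,Y)$ with $Y\asymp X$''. This is not justified and is false in general: each $\alpha_i$ is recovered from the distinguished $\gamma$'s as a linear combination of $\mu_l$ of them, so if every relevant $\gamma$ admits a rational approximation with denominator $q_j\le Y$ to within $YP^{-k}$, the inverse transform only produces an approximation to $\alpha_i$ with denominator $q_1\cdots q_{\mu_l}\le Y^{\mu_l}\le Y^{M}$. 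The correct contrapositive is therefore: $\A\notin\fN(X)$ forces some $\gamma$-coordinate outside $\fN(Y)+\Z$ only for $Y=X^{1/M}$. This multiplication of denominators is precisely the origin of the exponent $1/(6M)$ rather than $1/6$; it is an arithmetic loss in the coordinate change, not, as you suggest, the result of ``raising the one-variable saving to the $1/M$-th power through the H\"older step''. Only one factor of the (fully factored) $\G$-integral needs to be pruned, so no H\"older distribution of the saving occurs; with $Y\asymp X$ your argument would purport to prove the stronger saving $X^{-1/6}$, which should have been a warning sign.

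A secondary structural problem is your treatment of the non-distinguished blocks. Theorem~\ref{mvt} bounds integrals over the full torus; you cannot ``reduce the non-distinguished variables to a power of $J_{u_h,\k_h}$'' while simultaneously retaining the restricted domain $\cM(Q,X)$ for the distinguished coordinates, since all $s$ sums depend on the same $\A$. The paper instead applies \eqref{trivineq} to all $s$ sums at once, producing exponents $v_h=2u_h+(s-2s_0)/M$ on the $M$ distinguished sums, after which the coordinate change makes the integral factor completely into one- and two-dimensional pieces. Relatedly, bounding the remaining $M-1$ distinguished sums ``trivially by $P^{M-1}$'' forfeits the saving of $P^{-k}$ per coordinate supplied by the major-arc moment estimates (such as $\int_{\fM^*}|f(\G)|^{v_2}\,\dr\G\ll P^{v_2-5}$ from Lemma~\ref{aux}), and the resulting bound would exceed $P^{s-K}$. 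Your closing observation that in the smooth case the pruned coordinate must carry a genuine $|g|$ factor matching the templates of Lemma~\ref{prunesm} is correct and is indeed how the paper arranges the exponents.
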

\begin{proof}
    The relation \eqref{gamalp2} implies that, whenever $\A \in \fM(Q)$, then for every pair of indices $i,j$ there exists an integer $b_{i,j}$ with $|b_{i,j}| \le B = r \max_{i,j}\{c_{ij}, d_{ij}\}$ such that $\gamma_{k_i,j} - b_{i,j} \in \fM_{k_i}(Q)$. We show that when $\A \not\in \fN(X)$, then necessarily one has $\G \not\in \fN(Y) + \Z^r$ for $Y=X^{1/M}$. For this purpose, observe that for $i\in \{1,2\}$ only $\mu_i$ of the entries of $\G_{k_i}$ are independent. Now suppose that all entries of $\G_{k_i}$ are in $\fN_{k_i}(Y)+ \Z^r$.  Then for every $j \in \{1, \dots, \mu_i\}$ there exist $q_j \le X$ and $\Lambda_{k_i,j} \in \Z$ with $|\gamma_{k_i,j} - \Lambda_{k_i,j}/q_j| \le YP^{-k_i}$.  The invertibility of the coordinate transform \eqref{gamalp2} implies that we may retrieve the $\A_{k_i}$ from the $\G_{k_i}$, and we therefore deduce that
    \begin{align*}
	| \alpha_{k_i,l}  - a_{k_i,l}/q| \le \kappa YP^{-k_i} \qquad (1 \le l \le \mu_i)
    \end{align*}
    for some constant $\kappa$ depending at most on the $(c_{ij})$ and $(d_{ij})$.
    However, we have $q = q_1\cdots q_{\mu_i}\le Y^{\mu_i}$, whence
    \begin{align*}
	\alpha_{k_i,j} \in \fN_{k_i}(Y^{\mu_i}) \subseteq \fN_{k_i}(Y^{M}) = \fN_{k_i}(X) \qquad (1 \le j \le r_i).
    \end{align*}
    It follows that whenever $\A \in \cM(Q,X)$, then there exists some pair of indices $(k_i,j)$ with $\gamma_{k_i,j} \in \fn_{k_i}(Y)+ \Z$, so altogether $\G \in \cM(Q,Y)+  \Z^r \cap [-B,B]^r$.

    For the rest of the argument we abbreviate $\fM = \fM(Q)$, $\fN = \fN(Y)$ and $\cM = \cM(Q,Y)$, and we use the same conventions for the respective symbols when equipped with suffices or asterisks.

    For $r_Q \ne r_C$ and $\cA=[1,P]$ set $v_i = 2u_i + (s-2s_0)/M$ for $i\in \{1,2\}$, so that $(\mu_1-\mu_2)v_1 + \mu_2v_2=s$. The relations \eqref{trivineq} and \eqref{gamalp2} together with the above argument imply that there exist sets of indices $\cJ_1$ and $\cJ_2$, with $|\cJ_1|=\mu_1-\mu_2$ and $|\cJ_2|=\mu_2$, such that
    \begin{align*}
	   \tN_{s}(\cM(Q,X)) &\ll \int_{\cM(Q,X)} \prod_{i\in \cJ_1} |f(\G_i)|^{v_1} \prod_{j\in \cJ_2} |f(\G_j)|^{v_2}\dr \A\\
	   &\ll \int_{\cM(Q,Y)} \prod_{i \in \cJ_1} |f(\G_i)|^{v_1} \prod_{j \in \cJ_2}|f(\G_j)|^{v_2}\dr \G.
    \end{align*}
    Note that by the non-singularity condition we may assume that all entries $(\gamma_{k_2, i})_{i \in \cJ_1}$ are determined by the entries $(\gamma_{k_2, j})_{j \in \cJ_2}$. We therefore obtain
    \begin{align*}
	\tN_s(\cM(Q,X)) &\ll \int_{(\fM^*)^{\mu_2-1} \times \cM^*} T_{\cJ_1}(\fM_{k_1}^{\mu_1-\mu_2},\G_{k_2}) \prod_{j \in \cJ_2}|f(\G_j)|^{v_2} \dr\G_j\\
	  & + \int_{(\fM^*)^{\mu_2}} T_{\cJ_1}(\fM_{k_1}^{\mu_1-\mu_2-1} \times \cM_{k_1},\G_{k_2})\prod_{j \in \cJ_2}|f(\G_j)|^{v_2}  \dr\G_j ,
    \end{align*}
    where we wrote
    \begin{align*}
         T_{\cJ_1}(\fB,\G_{k_2})= \int_{\fB} \prod_{i \in \cJ_1} |f(\gamma_{k_1, i},\gamma_{k_2, i})|^{v_1} \dr \gamma_{k_1, i}.
    \end{align*}
    Observe that we have $v_2>2u_2=10$ regardless of which of $r_C$ or $r_Q$ is larger. Writing
    \begin{align*}
    	T(\fC) = \sup_{\gamma_{k_2} \in \fM_{k_2}} \int_{\fC} |f(\gamma_{k_1},\gamma_{k_2})|^{v_1} \dr \gamma_{k_1},
    \end{align*}
    we may therefore deploy Lemma~\ref{aux}  with $v_1 > 2u_1 = 4$ to obtain
    \begin{align*}
        \tN_{s}(\cM(Q,X)) &\ll T(\fM_{k_1})^{\mu_1-\mu_2} \bigg( \int_{\fM^*} |f(\G)|^{v_2} \dr \G\bigg)^{\mu_2-1} \int_{\cM^*} |f(\G)|^{v_2} \dr \G \;  \\
        & \phantom{AA} +  T(\fM_{k_1})^{\mu_1-\mu_2-1}T(\cM_{k_1}) \bigg( \int_{\fM^*} |f(\G)|^{v_2} \dr \G\bigg)^{\mu_2} \\
        &\ll P^{(v_2-5)\mu_2 +(v_1-k_1)(\mu_1 - \mu_2) }Y^{-1}.
    \end{align*}
    Upon noting that $s=v_1(\mu_1-\mu_2)+v_2\mu_2$ and $K=5\mu_2+k_1(\mu_1-\mu_2)$, this yields the desired conclusion.

    Similarly, for $r_Q=r_C=r/2$ we deduce from \eqref{trivineq} and Lemma~\ref{aux} (ii) that
    \begin{align*}
	\tN_{s}(\cM(Q,X)) &\ll  P^{s-2s_0}\int_{(\fM^*)^{r/2-1} \times \cM^*} \bigg(\prod_{i=1}^{r/2} |f(\G_i)|^{10}\bigg) \dr \G  \ll P^{s-2s_0+(5/2)r}Y^{-1},
    \end{align*}
	 and the result follows on noting that $s_0 = K = 2r_Q + 3r_C=(5/2)r$ in this case.
	
    Finally, in the smooth case we have $|\cJ_1|=\Delta=r_C-r_Q$ and $|\cJ_2|=r_Q$, and as in the argument leading to (\ref{smoothminor}) we obtain
    \begin{align*}
	N^*_{s}(\cM(Q,X)) &\ll P^{\lceil(2/3)r_Q\rceil}  \int_{\cM(Q,Y)} \prod_{i \in \cJ_1} |h(\G_i+\L_i)^{6} g(\G_i)| \prod_{j\in \cJ_2}|g(\G_j)|^{10}\dr \G
    \end{align*}
    for suitable vectors $\L_i = (\lambda_{2,i}, \lambda_{3,i}) \in \R^2$, where $\lambda_{k,i}$ is a linear combination of the coefficients $\gamma_{k,l}$ with $l \ne i$. By writing
    \begin{align*}
	\widehat T_{\cJ_1} (\fB,\G_{2})= \sup_{\L \in \R^{2\Delta}}\int_{\fB} \prod_{i \in \cJ_1} |h(\G_i + \L_i)^{6} g(\G_i)| \dr \gamma_{3, i},
    \end{align*}
    we see that
    \begin{align*}
	N^*_{s}(\cM(Q,X)) &\ll P^{\lceil(2/3)r_Q\rceil} \int_{(\fM^*)^{r_Q-1} \times \cM^*} \widehat T_{\cJ_1}(\fM_3^{\Delta},\G_{2}) \prod_{j\in \cJ_2}|g(\G_{j})|^{10} \dr\G_{j}\\
	& \quad + P^{\lceil(2/3)r_Q\rceil}\int_{(\fM^*)^{r_Q}} \widehat T_{\cJ_1}(\fM_3^{\Delta-1} \times \cM_3,\G_{2})\prod_{j\in \cJ_2}|g(\G_{j})|^{10}  \dr\G_{j}.
    \end{align*}
    Letting $$\widehat T(\fC) = \sup_{\gamma_2 \in \fM} \sup_{\L \in \R^2} \int_{\fC} |h(\G+\L)^6g(\G)| \dr \gamma_3,$$ then by an argument analogous to the one above with Lemma~\ref{prunesm} \eqref{bw07lem2} in the place of Lemma~\ref{aux}(i) we obtain
    \begin{align*}
        N_{s}^*(\cM(Q,X)) &\ll P^{\lceil(2/3)r_Q\rceil} \widehat T(\fM_3)^{\Delta} \bigg( \int_{\fM^*} |f(\G)|^{10} \dr \G\bigg)^{r_Q-1} \int_{\cM^*} |f(\G)|^{10} \dr \G \;  \\
         & \phantom{AAAAA} +  P^{\lceil(2/3)r_Q\rceil} \widehat T(\fM_3)^{\Delta-1}\widehat T(\cM_3) \bigg( \int_{\fM^*} |f(\G)|^{10} \dr \G\bigg)^{r_Q} \\
        &\ll P^{\lceil(2/3)r_Q\rceil}  P^{4\Delta}   P^{5r_Q}Y^{-1/6} \ll P^{s-K}Y^{-1/6}.
    \end{align*}
    This completes the proof of the lemma.
\end{proof}

The analysis of the minor arcs $\fn(X)=[0,1]^r \setminus \fN(X)$ is now completed on inserting Lemma~\ref{minor23}, together with the estimates ensuing from Lemma~\ref{weyl23}, into \eqref{pruning}.

For the major arc analysis, only small modifications to the arguments of Section 4 are required.  In completing the singular series, we again have to make a case distinction as to whether $r_Q>r_C$ or not. If $r_Q>r_C$ we have \eqref{par23} and \eqref{par2>3}, and with these parameters \eqref{A-bd} becomes
\begin{align}\label{A-bd2>3a}
 A(p^i)&  \ll p^{-i\left(\frac13(10+1/r) - 2\right) + \ep} + p^{-i\left(\frac13(4+1/r) - 1\right) + \ep} \ll p^{-\frac{i}{3}(1+1/r)+\ep},
\end{align}
and it follows that
\begin{equation}\label{sumAbd}
    \sum_{i=3}^{\infty} A(p^i) \ll p^{-1-1/r+\ep}.
\end{equation}
For $i\in \{1,2\}$ we make recourse to the estimate \eqref{Sqaest2}.
Following through the argument of the proof of Lemma~\ref{ss}, one thus obtains
\begin{align}\label{A-bd2>3b}
    A(p^i) &\ll p^{-i\left(\frac12(4+1/r) - 1\right) + \ep} +p^{-i\left(\frac12(10+1/r) - 2\right) + \ep}  \ll p^{-1-1/(2r)+\ep} \quad (i=1,2).
\end{align}
Now on combining \eqref{sumAbd} and \eqref{A-bd2>3b} one has for a suitable constant $c$ that
\begin{align*}
    \fS \le \prod_p(1+cp^{-1-1/(3r)}) \ll 1.
\end{align*}

If $r_C>r_Q$ we have \eqref{par3>2}, and thus
\begin{align*}
    A(p^i)&  \ll p^{-i\left(\frac13(10+1/r) - 2\right) + \ep} + p^{-i\left(\frac13(6+1/r) - 1\right) + \ep} \ll p^{-i(1+1/(3r))+\ep},
\end{align*}
which is also satisfactory.  Finally, in the case $r_Q=r_C$ the bound is given by the first term in \eqref{A-bd2>3a}, whence $A(p^i) \ll p^{-4i/3}$ for all $i$.
It follows that the singular series converges also in the setting of Theorem~\ref{quadcubthm}, and also that $\fS-\fS(X) \ll X^{-\delta}$ for some $\delta > 0$.

For the singular integral, the results of Lemma~\ref{si} are satisfactory even in the case of Theorem~\ref{quadcubthm}.  To verify this, we first observe that when $r_Q \neq r_C$ one has $u_1 \ge 2> 3/2=(k/2)\nu(1)$ and $u_2=5>3=(k/2)(\nu(1)+\nu(2))$, whereas when $r_Q=r_C$ we have $u_1=5 > 3 = (k/2)\nu(1)$.  The proof of Theorem~\ref{quadcubthm} is now complete on recalling the concluding discussion of Section 4.

\bibliographystyle{amsplain}
\bibliography{fullrefs}

\providecommand{\bysame}{\leavevmode\hbox to3em{\hrulefill}\thinspace}
\providecommand{\MR}{\relax\ifhmode\unskip\space\fi MR }
% \MRhref is called by the amsart/book/proc definition of \MR.
\providecommand{\MRhref}[2]{%
  \href{http://www.ams.org/mathscinet-getitem?mr=#1}{#2}
}
\providecommand{\href}[2]{#2}
\begin{thebibliography}{10}

\bibitem{Bak:DI}
R.~C. Baker, \emph{{D}iophantine inequalities}, Clarendon Press, Oxford, 1986.

\bibitem{Bir:57}
B.~J. Birch, \emph{Homogeneous forms of odd degree in a large number of
  variables}, Mathematika \textbf{4} (1957), 102--105.

\bibitem{Brandes:15p}
J.~Brandes, \emph{A note on $p$-adic solubility for forms in many variables},
  Bull. London Math. Soc. \textbf{47} (2015), 501--508.

\bibitem{BHB:14}
T.~D. Browning and D.~R. Heath-Brown, \emph{Forms in many variables and
  differing degrees}, J. Eur. Math. Soc. (2015), to appear.

\bibitem{BC:92}
J.~Br{\"u}dern and R.~J. Cook, \emph{On simultaneous diagonal equations and
  inequalities}, Acta Arith. \textbf{62} (1992), 125--149.

\bibitem{BW:07}
J.~Br{\"u}dern and T.~D. Wooley, \emph{The {H}asse principle for pairs of
  diagonal cubic forms}, Annals of Math. \textbf{166} (2007), 865--895.

\bibitem{BW:13}
\bysame, \emph{Subconvexity for additive equations: pairs of undenary cubic
  forms}, J.\ Reine Angew.\ Math.\ \textbf{696} (2014), 31--67.

\bibitem{BW:14corr}
\bysame, \emph{Correlation estimates for sums of three cubes}, Ann. Sc. Norm.
  Super. Pisa Cl. Sci. (2015), to appear.

\bibitem{BW:14h3}
\bysame, \emph{The {H}asse principle for systems of diagonal cubic forms},
  Math. Ann. (2015), in press.

\bibitem{Cook:72}
R.~J. Cook, \emph{Pairs of additive equations}, Michigan Math. J. \textbf{19}
  (1972), 325--331.

\bibitem{Cook:83}
\bysame, \emph{Pairs of additive equations {III}: {Q}uintic equations}, Proc.
  Edinburgh Math. Soc. \textbf{26} (1983), 191--211.

\bibitem{DL:69}
H.~Davenport and D.~J. Lewis, \emph{Simultaneous equations of additive type},
  Philos. Trans. Roy. Soc. Ser. A \textbf{264} (1969), 557--595.

\bibitem{Knapp:07}
M.~P. Knapp, \emph{Diagonal equations of different degrees over p-adic fields},
  Acta Arith. \textbf{126} (2007), 139--154.

\bibitem{P:02ineq3}
S.~T. Parsell, \emph{On simultaneous diagonal inequalities, {III}}, Quart. J.
  Math. \textbf{53} (2002), 347--363.

\bibitem{P:02pae}
\bysame, \emph{Pairs of additive equations of small degree}, Acta Arith.
  \textbf{104} (2002), 345--402.

\bibitem{Sch:Eq}
W.~M. Schmidt, \emph{Equations over {F}inite {F}ields. {A}n {E}lementary
  {A}pproach}, Springer, Berlin, Heidelberg, New York, 1976.

\bibitem{Sch:85}
\bysame, \emph{The density of integer points on homogeneous varieties}, Acta.
  Math. \textbf{154} (1985), 243--296.

\bibitem{Va:86a}
R.~C. Vaughan, \emph{On {W}aring's problem for cubes}, J. Reine Angew. Math.
  \textbf{365} (1986), 122--170.

\bibitem{V:HL}
\bysame, \emph{The {H}ardy-{L}ittlewood method}, 2nd ed., Cambridge University
  Press, Cambridge, 1997.

\bibitem{VW:95}
R.~C. Vaughan and T.~D. Wooley, \emph{Further improvements in {W}aring's
  problem}, Acta Math. \textbf{174} (1995), 147--240.

\bibitem{W:91sae2}
T.~D. Wooley, \emph{On simultaneous additive equations {II}}, J. {R}eine
  {A}ngew. {M}ath. \textbf{419} (1991), 141--198.

\bibitem{W:97essn}
\bysame, \emph{On exponential sums over smooth numbers}, J. {R}eine {A}ngew.
  {M}ath. \textbf{488} (1997), 79--140.

\bibitem{W:98sae4}
\bysame, \emph{On simultaneous additive equations {IV}}, Mathematika
  \textbf{45} (1998), 319--335.

\bibitem{W:00cubes}
\bysame, \emph{Sums of three cubes}, Mathematika \textbf{47} (2000), 53--61.

\bibitem{W:12war}
\bysame, \emph{The asymptotic formula in {W}aring's problem}, Internat.\ Math.\
  Res.\ Notices (2012), 1485--1502.

\bibitem{W:14mec3}
\bysame, \emph{The cubic case of the main conjecture in {V}inogradov's mean
  value theorem}, {\tt arXiv:1401.3150} (2014), preprint.

\bibitem{W:15sae5}
\bysame, \emph{Rational solutions of pairs of diagonal equations, one cubic and
  one quadratic}, Proc.\ London Math.\ Soc.\ \textbf{110} (2015), no.~2,
  325--356.

\end{thebibliography}

\end{document}